\documentclass[12pt]{amsart}
\usepackage{amsmath}
\usepackage{amsfonts}
\usepackage{amssymb}
\usepackage[all]{xy}           %xypic macro for latex2.09
\usepackage[normalem]{ulem}

\usepackage{bbding}
\usepackage{txfonts}
\usepackage[shortlabels]{enumitem}
\usepackage{ifpdf}
\ifpdf
 \usepackage[colorlinks,final,backref=page,hyperindex]{hyperref}
\else
 \usepackage[colorlinks,final,backref=page,hyperindex,hypertex]{hyperref}
\fi
\usepackage{tikz}

\usepackage{xspace}

%\renewcommand\baselinestretch{1}    %was    1, 1.5 for double sp

%%standard setting
%\topmargin -0.3truein \textheight 8.4truein \oddsidemargin 0.2truein
%\evensidemargin 0.2truein \textwidth 440pt
%=========================================================================================
%%little larger standard setting: good setting
\topmargin -.8cm \textheight 22.8cm \oddsidemargin 0cm \evensidemargin -0cm \textwidth 16.3cm
%=========================================================================================
%%little larger standard setting: good setting
%\topmargin -.8cm \textheight 20cm \oddsidemargin 0cm \evensidemargin -0cm \textwidth 16.3cm
%=========================================================================================
%%laptop setting
%\topmargin -.8cm \textheight 16cm \oddsidemargin -1.8cm \evensidemargin -1.8cm \textwidth 19.3cm

\begin{document}

\newcommand{\nc}{\newcommand}
\newcommand{\delete}[1]{}

%\delete{
\nc{\mlabel}[1]{\label{#1}}  % Use this to suppress names
\nc{\mcite}[1]{\cite{#1}}  % Use this to suppress names
\nc{\mref}[1]{\ref{#1}}  % Use this to suppress names
\nc{\mbibitem}[1]{\bibitem{#1}} % Use this to show number
%}

\delete{
\nc{\mlabel}[1]{\label{#1}  % Use the next two lines to show names
{\hfill \hspace{1cm}{\bf{{\ }\hfill(#1)}}}}
\nc{\mcite}[1]{\cite{#1}{{\bf{{\ }(#1)}}}}  % Use this lines to show names
\nc{\mref}[1]{\ref{#1}{{\bf{{\ }(#1)}}}}  % Use this lines to show names
\nc{\mbibitem}[1]{\bibitem[\bf #1]{#1}} % Use this to show name
}

%%%%%%%%%%%%%%%%%%%%%%%% Statements
\newtheorem{theorem}{Theorem}[section]
\newtheorem{thm}[theorem]{Theorem}
\newtheorem{prop}[theorem]{Proposition}
\newtheorem{lemma}[theorem]{Lemma}
\newtheorem{coro}[theorem]{Corollary}
\newtheorem{cor}[theorem]{Corollary}
\newtheorem{prop-def}[theorem]{Proposition-Definition}
\newtheorem{claim}[theorem]{Claim}
\newtheorem{propprop}{Proposed Proposition}[section]
\newtheorem{conjecture}[theorem]{Conjecture}
\newtheorem{assumption}{Assumption}
\newtheorem{condition}[theorem]{Assumption}
\newtheorem{question}[theorem]{Question}
\theoremstyle{definition}
 \newtheorem{defn}[theorem]{Definition}
\newtheorem{exam}[theorem]{Example}
\newtheorem{remark}[theorem]{Remark}
\newtheorem{ex}[theorem]{Example}
\newtheorem{coex}[theorem]{Counterexample}
\newtheorem{conv}[theorem]{Convention}
\renewcommand{\labelenumi}{{\rm(\alph{enumi})}}
\renewcommand{\theenumi}{\alph{enumi}}
\renewcommand{\labelenumii}{{\rm(\roman{enumii})}}
\renewcommand{\theenumii}{\roman{enumii}}

\nc{\abf}{Algebraic Birkhoff Factorization\xspace}
\nc{\cc}{\mathfrak{C}}
\nc{\dm}{\mathcal{DM}}
\nc{\Ima}{\operatorname{Im}}            %% Image
\nc{\Dom}{\operatorname{Dom}}      %% Domain
\nc{\Diff}{\operatorname{Diff}}           %% Diffeomorphisms
\nc{\End}{\operatorname{End}}        %% Endomorphism
\nc{\Id}{\operatorname{Id}}                 %% Identity
\nc{\Isom}{\operatorname{Isom}}      %% the Isometry group
\nc{\Ker}{\operatorname{Ker}}           %% Kernel
\nc{\Lin}{\operatorname{Lin}}             %% Linear part
\nc{\Res}{\operatorname{Res}}         %% Residue
\nc{\spec}{\operatorname{sp}}           %% spectrum
\nc{\supp}{\operatorname{supp}}      %% support
\nc{\Tr}{\operatorname{Tr}}                 %% trace of operator
\nc{\Vol}{\operatorname{Vol}}            %% Volume
\nc{\sign}{\operatorname{sign}}         %% sign
\nc{\id}{\operatorname{id}}
\nc{\lin}{\operatorname{lin}}
\nc{\I}{J}
\nc{\cala}{\mathcal{A}}
\nc{\deff}{\mathbb{Q}}
\nc{\ot}{\otimes}
\nc{\bfk}{\mathbf{k}}
\nc{\wvec}[2]{{\scriptsize{\big [ \!\!
    \begin{array}{c} #1 \\ #2 \end{array} \!\! \big ]}}}
\nc{\cals}{{\mathcal S}} \nc{\calt}{{\mathcal T}}
\nc{\mlimits}{}

%color remarks=======================================================================================
\nc{\zb}[1]{\textcolor{blue}{ #1}}
\nc{\li}[1]{\textcolor{red}{ #1}}
\nc{\sy}[1]{\textcolor{purple}{  #1}}
\nc{\ssy}[1]{\textcolor{orange}{  #1}}

\nc{\lic}[1]{\textcolor{red}{\tt{Li: #1}}}
\nc{\lit}[2]{\textcolor{red}{#1}{ \textcolor{red}{(\sout{#2})}}}
%\nc{\lit}[2]{\textcolor{blue}{#1}{}} %use this line instead of the previous one to show only the new changes
\nc{\lir}[1]{\textcolor{red}{\tt {Li: #1}}}

%Number field=================================================================================
\newcommand{\Z}{\mathbb{Z}}
\newcommand{\M}{\mathbb{M}}
\newcommand{\ZZ}{\mathbb{Z}}
\newcommand{\Q}{\mathbb{Q}}               %% rational numbers
\newcommand{\QQ}{\mathbb{Q}}               %% rational numbers
\newcommand{\R}{\mathbb{R}}               %% real numbers
\newcommand{\RR}{\mathbb{R}}               %% real numbers
\newcommand{\coof}{L}           % coordinate field
\newcommand{\coef}{{\mathbb F}}
\newcommand{\p}{\partial}         % \begin{coefficient ring
%Cones==================================================================================
\nc{\cone}[1]{\langle #1\rangle}
\nc{\cl}{c}                 %closed superscripts
\nc{\op}{o}                 %open
\nc{\ccone}[1]{\langle #1\rangle^\cl}
\nc{\ocone}[1]{\langle #1\rangle^\op}
\nc{\gc}{\mathcal{C}}
 \nc{\dcc}{\mathfrak{DC}}    %decorated closed cone set
\nc{\dsmc}{\dcc }  %decorated smooth closed cones
\nc{\csup}{{^\ast}}
\nc{\bs}{\check{S}\,}
\nc{\decc}{{\underline{C}}}
\nc{\ola}[1]{\stackrel{#1}{\longrightarrow}}

%Special======================================================================================
 \nc {\linf}{{\rm lin} (F)^\perp}
\nc{\dirlim}{\displaystyle{\lim_{\longrightarrow}}\,}
\nc{\coalg}{\mathbf{C}}
\nc{\barot}{{\otimes}}

%============================================================================================
\newcommand{\one}{\mbox{$1 \hspace{-1.0mm} {\bf l}$}}
\newcommand{\A}{\mathcal{A}}              %% an algebra
\newcommand{\Abb}{\mathbb{A}}
\renewcommand{\a}{\alpha}                    %% short for \alpha
\renewcommand{\b}{\beta}                       %% short for \beta

\newcommand{\B}{\mathcal{B}}              %% bounded operators
\newcommand{\C}{\mathbb{C}}
 \newcommand{\calm}{{\mathcal M}}
\newcommand{\frakf}{{\mathfrak f}}
\newcommand{\CC}{\mathcal{C}}           %% complex numbers
\newcommand{\CR}{\mathcal{R}}           %% ideal
\newcommand{\D}{\mathbb{D}}               %% Unit disc
\newcommand{\del}{\partial}                    %% short for \partial
\newcommand{\DD}{\mathcal{D}}           %% Dirac operator
\newcommand{\Dslash}{{D\mkern-11.5mu/\,}} %% Dirac slash operator
\newcommand{\e}{\varepsilon}            %% short for \varepsilon
\newcommand{\F}{\mathbb{F}}                %% short for \varepsilon
\newcommand{\Ga}{\Gamma}                  %% short for \Gamma
\newcommand{\ga}{\gamma}                   %%short for \gamma
\renewcommand{\H}{\mathcal{H}}           %% Hilbert space
\newcommand{\half}{{\mathchoice{\thalf}{\thalf}{\shalf}{\shalf}}}
                                        %% variable-size 1/2
\newcommand{\hideqed}{\renewcommand{\qed}{}} %% to suppress `\qed'
\newcommand{\K}{\mathcal{K}}             %% Hilbert space
\renewcommand{\L}{\mathcal{L}}          %% operator algebra
\newcommand{\la}{\lambda}                   %% short for \lambda
\newcommand{\<}{\langle}
\renewcommand{\>}{\rangle}
\newcommand{\Mop}{\star}                     %% Moyal product (just *)
\newcommand{\N}{\mathbb{N}}             %% nonnegative integers
\newcommand{\norm}[1]{\left\lVert#1\right\rVert}    %% norm (C)
\newcommand{\norminf}[1]{\left\lVert#1\right\rVert_\infty} %% norminfinity (C)
\newcommand{\om}{\omega}                 %% Short for omega
\newcommand{\Om}{\Omega}                %% Short for omega
\newcommand{\ol}{\\widetilde}                  %% Short for omega
\newcommand{\OO}{\mathcal{O}}          %% Short for omega
\newcommand{\ovc}[1]{\overset{\circ}{#1}}%% Overset circle symbol
\newcommand{\ox}{\otimes}                    %% Short for tensor product
\newcommand{\pa}{\partial}
\newcommand{\piso}[1]{\lfloor#1\rfloor} %% integer part

\newcommand{\rad}{{\mathbf r}}
\newcommand{\sepword}[1]{\quad\mbox{#1}\quad} %% well-spaced words
\newcommand{\set}[1]{\{\,#1\,\}}               %% set notation
\newcommand{\shalf}{{\scriptstyle\frac{1}{2}}} %% tiny fraction  1/2
\newcommand{\slim}{\mathop{\mathrm{s\mbox{-}lim}}} %% strong-op limit
\renewcommand{\SS}{\mathcal{S}}        %% Schwartz space
\newcommand{\Sp}{{\rm Sp}}
\newcommand{\sg}{\sigma}                              %% short for\sigma
\newcommand{\T}{\mathbb{T}}                %% circle as a group
\newcommand{\tG}{\widetilde{G}}           %% covering group
\newcommand{\thalf}{\tfrac{1}{2}}            %% small* fraction  1/2
\newcommand{\Th}{\Theta}
\renewcommand{\th}{\theta}
\newcommand{\tri}{\Delta}                        %%short for Laplacian
\newcommand{\Trw}{\Tr_\omega}           %% a Dixmier trace
\newcommand{\UU}{\mathcal{U}}              %% U
\newcommand{\Afr}{\mathfrak{A}}           %% Ufrac
\newcommand{\vf}{\varphi}                       %% short for \varphi
\newcommand{\x}{\times}                          %% cartesian product or cross
\newcommand{\wh}{\widehat}                  %% short for widehat
\newcommand{\wt}{\widetilde}                 %% short for widetilde
\newcommand{\ul}[1]{\underline{#1}}             %% underline anything
\renewcommand{\.}{\cdot}                          %% anonymous variable
\renewcommand{\:}{\colon}                       %% colon in  f: A -> B
\newcommand{\comment}[1]{\textsf{#1}}

\nc{\calc}{\mathcal{C}}
\nc{\calh}{\mathcal{H}}
%\nc{\deff}{K}
\nc{\cali}{\mathcal{I}}
\nc{\frakR}{\mathfrak{R}}
\nc{\frakS}{\mathfrak{S}}
%---------------------------------------------------
\nc {\vep}{\varepsilon}
\nc {\ltcone}{lattice cone\xspace}
\nc {\lC}{(C, \Lambda _C)}
%-------------------------------------------------------
\nc {\conefamilyc}{{\mathfrak{C}}}
\nc {\conefamilyd}{{\mathfrak{D}}}
\nc {\conefamilye}{{\mathfrak{E}}}
\nc {\conefamilyf}{{\mathfrak{F}}}
%------------------------------------------------------
\nc {\gname}{polar\xspace}
\nc {\simplefraction}{simple simplicial fraction\xspace}
\nc {\simplefractions}{simple simplicial fractions\xspace}
\nc {\Simplefractions}{Simple simplicial fractions\xspace}
\nc {\multiplefraction}{simplicial fraction\xspace}
\nc {\multiplefractions}{simplicial fractions\xspace}
\nc{\pord}{\mbox{\rm p-ord}}
\nc{\pres}{\mbox{\rm p-res}}
%----------------------------------------
\nc {\hfc}{homogeneous family of cones}
\nc {\hfsc}{homogeneous families of cones}
\nc {\hfcs}{homogeneous family of cones \xspace}
\nc {\hfscs}{homogeneous families of cones \xspace}
\nc{\ppp}{projectively properly positioned \xspace}
%--------------------------------------------------
\nc {\frS}{\mathfrak{S}}
%--------------------------------------------------

\title[Conical approach to multivariate Laurent expansions]{A conical approach to Laurent expansions for multivariate  meromorphic germs   with linear poles}

\author{Li Guo}
\address{Department of Mathematics and Computer Science,
         Rutgers University,
         Newark, NJ 07102, USA}
\email{liguo@rutgers.edu}

\author{Sylvie Paycha}
\address{Institute of Mathematics,
University of Potsdam,
Am Neuen Palais 10,
D-14469 Potsdam, Germany\\ On leave from the Universit\'e Blaise Pascal\\
Clermont-Ferrand, France}
\email{paycha@math.uni-potsdam.de}

\author{Bin Zhang}
\address{School of Mathematics, Yangtze Center of Mathematics,
Sichuan University, Chengdu, 610064, P. R. China}
\email{zhangbin@scu.edu.cn}

\date{\today}
\begin{abstract} We use convex polyhedral cones to study a large class of multivariate  meromorphic germs, namely those with linear poles,  which naturally arise  in various contexts in mathematics and physics.
We express such a germ as a sum of a holomorphic germ and a linear combination of special non-holomorphic germs called polar germs.
In analyzing the supporting cones -- cones that reflect  the pole structure of the polar germs -- we obtain a geometric criterion for the  non-holomorphicity of linear combinations of polar germs. This yields the uniqueness of the above sum when required to be supported  on a suitable family of cones and assigns a Laurent expansion to the germ.
Laurent expansions provide various decompositions of such germs and thereby a uniformized proof of known results on decompositions  of rational fractions. These Laurent expansions  also yield new concepts on the space of such germs, all of which are independent of the choice of the specific Laurent expansion. These include a generalization of Jeffrey-Kirwan's residue, a filtered residue and a coproduct in the space of such germs. When applied to exponential sums on rational convex polyhedral cones, the filtered residue yields back exponential integrals.
\end{abstract}

\subjclass{32A20, 32A27, 52A20, 52C07}

\keywords{meromorphic functions, convex cones, Laurent expansions, residues, Jeffrey-Kirwan's residue}

\maketitle

\vspace{-2cm}

\tableofcontents

\setcounter{section}{0}

\section {Introduction}
Our study aims at an extension of the classical Laurent theory in one variable to multivariate meromorphic germs.

The classical Laurent theory assigns to a meromorphic germ at zero in one variable a unique Laurent expansion. So the space $\calm_\C(\C)$  of meromorphic germs at zero (say in the variable $\vep $) splits   into the space $\calm_{\C,+}(\C)$ of holomorphic germs at zero and  the  space $\C [[\vep ^{-1}]]=\oplus_{k\leq -1} \C \e^{k} $  consisting of  the polar part of meromorphic germs:
\begin{equation}\label{eq:Laurentdim1}
\calm_{\C}(\C)=\C [[\vep ^{-1}]] \oplus \calm_{\C,+}(\C),
\end{equation}
the direct sum on the right hand side hosting Laurent expansions.
We note that the direct sum decomposition comes as a consequence of the Laurent expansions.

For multivariate meromorphic germs, one naturally asks the same questions, namely how to
\begin{enumerate}
\item define a ``polar part" as a  linear complement of the space of holomorphic germs.
\item obtain a canonical basis or building blocks of this linear complement from which to assign Laurent expansions to meromorphic germs.
\end{enumerate}

While these problems remain open in general, in this paper we provide an answer for another generalization of the Laurent series to  multivariate meromorphic germs, called multivariate meromorphic germs at zero with linear poles, i.e., whose poles lie on unions of hyperplanes.
Such meromorphic germs   arise in various areas of mathematics and in physics as ``regularized" integrals or sums with linear or conical constraints in the variables, and more specifically  in
\begin{itemize}
\item  perturbative quantum field theory when computing Feynman integrals by means
 of analytic regularization \`a la Speer  \cite{Sp1,Sp2} (see also the recent work  by Dang \cite{De}), where the linear constraints in the integration variables correspond to conservation of momentum;
\item    number theory with multiple zeta functions~\cite{Ho,Za} (see also~\cite{GZ,MP,Ma,Zh}) and their generalizations  such as cyclotomic \cite{Te} and  Witten multiple zeta functions \cite{KMT} - the  conical constraint  {$n_1> \cdots> n_k$} on the variables in the summand used to define multiple zeta functions, is responsible for the linearity of the poles;
\item
 the combinatorics on cones  when evaluating exponential integrals or sums on cones following Berline and Vergne {\cite{BV1}} (see also \cite{GP}) in the context of Euler-Maclaurin formula;
\item  algebraic geometry, in particular with the celebrated  Jeffrey-Kirwan residue \cite{JK1,JK2}, see also \cite{dCP} for a review.
\end{itemize}

In our approach of multivariate meromorphic germs with linear poles, the classical Laurent theory generalizes to a {\em local Laurent theory} in the sense that one can cover the space of meromorphic germs at zero  with linear poles by what we call Laurent subspaces.
To define these  Laurent subspaces, we use the geometry of cones. Laurent subspaces are indexed by  properly positioned families of simplicial cones, a  family of cones being properly positioned when the cones meet along faces and their union does not contain any nontrivial subspace. So a meromorphic germ in a Laurent subspace  has a Laurent expansion \emph{supported} by the corresponding properly positioned family of simplicial cones, called \emph{supporting cones}.
In the framework of certain generalized   subdivisions we call pan-subdivisions, one can build a direct system   of properly positioned families of simplicial cones. Thus the corresponding set of Laurent subspaces inherits a  direct system structure whose direct limit is the whole space of meromorphic germs at zero with linear poles.

We explicit below our geometric approach and at the same time give an outline of the paper.

To distinguish the polar part from  the holomorphic part, we fix an inner product and  use the orthogonal complement to define the key concept of polar germs, which serve as building blocks for the polar part. A polar germ is a non-holomorphic germ represented by a fraction  $\frac {h(\ell_1, \cdots , \ell_m)}{L_1^{s_1}\cdots L_n^{s_n}},$  with $h$ is a holomorphic germ at zero in variables $ \ell_1,\cdots, \ell_m$ orthogonal to
the linear forms $ L_1, \cdots ,L_n$ in the pole with multiplicities $s_1,\cdots, s_n$ (Definition \ref{defn:polargerms}).
 We then decompose a meromorphic germ as a sum of polar germs and a holomorphic germ (Theorem \ref{thm:surj}) by operations on fractions.

However such a decomposition is not unique, as the equation $\frac{1}{L_1L_2}=\frac{1}{L_1(L_1+L_2)}+\frac{1}{L_2(L_1+L_2)}$ indicates.
We use the geometry underlying meromorphic germs to address the uniqueness of the decompositions. To a  polar germ $\frac {h(\ell_1, \cdots , \ell_m)}{L_1^{s_1}\cdots L_n^{s_n}},$ we assign  a supporting cone  (Definition \mref{defn:suppcone}) $\langle L_1,\cdots, L_n\rangle $ generated by  the  vectors  $L_1,\cdots, L_n$.

The notion of supporting cones provides a geometric criterion for the non-holomorphicity, in particular the linear independence, of a sum of polar germs in Theorem \ref{thm:non-holo}. More precisely, if the family of supporting cones of a linear combination of polar germs is ``properly positioned" (Definition~\ref{defn:ppp}), then this linear combination cannot be holomorphic.
Properly positioned supporting cones are essential to assign Laurent expansions  to meromorphic germs (Definition \ref{defn:relLaurent}) with the help of a  surjective "forgetful map" (\ref{eq:varphi}) from the space of formal Laurent expansions to the space of meromorphic germs. We first identify the Laurent subspaces in Proposition~\mref{pp:finj} and build from there the Laurent expansion supported on an appropriate properly positioned family of cones. With the notations of the previous example, $\frac{1}{L_1L_2}$ is the   Laurent expansion supported on the properly positioned family $\{\langle L_1,L_2\rangle\}$ while $\frac{1}{L_1(L_1+L_2)}+\frac{1}{L_2(L_1+L_2)}$ is the   Laurent expansion supported on the properly positioned family $\{\langle L_1,L_1+L_2\rangle, \langle L_2, L_1+L_2\rangle\}$.

As with the one variable case, an immediate consequence of the Laurent expansions is a splitting of the space of meromorphic germs with linear poles into a direct sum of the space of holomorphic ones and the space spanned by the polar germs  (Corollary \mref{cor:DecomF}). This direct sum defines in turn a projection onto the holomorphic part along the polar part, a ``multivariate subtraction",  which is multiplicative on orthogonally variate germs (Corollary \ref{cor:proj}), generalizes the minimal subtraction projection operator for meromorphic germs in one variable. This projection is a key ingredient for the algebraic Birkhoff factorization \cite{GPZ2}.

On the grounds of the Laurent expansions and homogeneity properties of the kernel of the forgetful map (Theorem \ref{thm:kernphi}), we equip the space of meromorphic germs with multiple gradings, given by total orders  of the poles -- called the p-order -- of the polar germs arising in the expansion, the spaces spanned by the supporting cones, as well as the dimensions of the supporting cones (Theorem~\ref{thm:SpaceDec}).
These gradings yield several further applications by means of a  uniformized approach. More precisely,
\begin{enumerate}
\item we generalize the Brion-Vergne decomposition~\cite{BrV} $ R_{\mathbf \Delta}= { G}_{\mathbf \Delta}\oplus { NG}_{\mathbf \Delta}$ of rational germs at zero  with poles lying in unions of  hyperplanes in a given  hyperplane  arrangement $\Delta$. See Corollary \mref{thm:BrionVergne}.
\item we obtain the decomposition~\cite[Theorem 7.3]{BV2} of Berline and Vergne as a consequence of a more refined  decomposition in Theorem \ref{thm:SpaceDec}. In contrast to their approach which applies to a meromorphic germ with a prescribed  set of poles determined by a  given hyperplane arrangement $\Delta$, here we consider the whole class of meromorphic germs at zero with linear poles.
\item through a projection to a suitable components from one of the gradings, we define a generalized Jeffrey-Kirwan residue~\cite{BrV,JK1} valid for all meromorphic germs at zero with linear poles in stead of for $ R_{\mathbf \Delta}$. See Corollary~\mref{co:jkr} and Definition~\mref{de:jkr}.
\end{enumerate}

The Laurent expansions have further  interesting consequences leading to new results. The ``p-order" generalized to   meromorphic germs at zero
gives a filtration of the meromorphic germs which generalizes the filtration by the order of the poles on meromorphic germs at zero in one variable and defines a valuation on the division ring of Laurent series \cite[Example 4.2.2]{E}.
We further introduce a filtered residue, the ``p-residue" (Definition \ref{defn:pres}) which,   for Laurent expansions    $\sum\mlimits_{n=-N}^\infty a_n\, x^n$ in one variable  filtered by the valuation given by the order $N\geq 0$ of the poles, corresponds to $\frac{a_N}{x^N}$.
Composed with the exponential sum $S$~\cite{Ba} on a lattice cone ${\rm \pres}\circ S$, the p-residue turns out to be compatible with subdivisions (Proposition~\ref{thm:ressubd}), as a result of which  (see Corollary \mref{cor:resS}), the p-residue yields back the corresponding exponential integral on the lattice cone, related to the former by the Euler-Maclaurin formula studied in \cite{GPZ2}.

Finally, using Laurent expansions, we define in Section~\ref{sec:coproduct} a  coproduct on the space of meromorphic germs at zero with linear poles,  which  is  closely related to  the coproduct on cones  derived in ~\mcite{GPZ3}. This relation is most relevant in the context of renormalization \`a la Connes and Kreimer~\cite{CK} who regarded a renormalized map as a map defined on a coalgebra and taking values in  meromorphic functions.

Throughout this paper, $\F$ denotes a fixed subfield of $\RR$.

\section{A decomposition of meromorphic germs}
\mlabel{s:PolarGerms}
 In order to show  the existence of a decomposition of the space of  meromorphic germs, we first  introduce the concept of a polar germ which will later serve as the building blocks of the linear complement of holomorphic germs.

\subsection{Polar germs}
\mlabel{ss:Germs}
We begin with some necessary preliminary concepts.

\begin{defn} \mlabel{defn:Spaces}
\begin{enumerate}
\item  A {\bf lattice (vector) space} is a pair $(V, \Lambda _V)$ where $V$ is a finite dimensional real vector space and $\Lambda _V$
is a lattice in $V$, that is, a finitely generated abelian subgroup of $V$ whose $\R$-linear span is $V$;
\item An {\bf $\F$-inner product} on a lattice space $(V,\Lambda_V)$ is an inner product $Q$ on $V$ such that the restriction of $Q$ to $\Lambda_{V}\ot \F \subseteq \Lambda_V\ot \R =V$ and hence to $\Lambda_V$ takes values in $\F$;
\item A lattice space with an $\F$-inner product is called an {\bf $\F$-Euclidean lattice space};
\item A {\bf filtered space} is a real vector space $V$ with a filtration $V_1\subset V_2\subset \cdots $ of finitely dimensional real vector subspaces such that $V=\cup_{k\geq 1} V_k$. Let $j_k:V_k\to V_{k+1}$ denote the inclusion;
\item  A {\bf filtered lattice space} is a filtered space $V=\cup_{k\geq 1} V_k$ with lattices $\Lambda_k:=\Lambda_{V_k}$ of $V_k$ such that $\Lambda _{k+1}|_{V_k}=\Lambda _k, k\geq 1$. Then we denote the filtered lattice space by $(V,\Lambda_V)=\cup_{k\geq 1} (V_k,\Lambda_{V_k})$ where $\Lambda_V=\cup_{k\geq 1} \Lambda_{V_k}$;
\item  An {\bf inner product} $Q$ on a filtered space $V=\cup_{k\geq 1}V_k$ is a sequence of inner products
$$ Q_k(\cdot,\cdot)=(\cdot,\cdot)_k: V_k\ot V_k \to \RR, \quad k\geq 1,$$
that is compatible with the inclusions $j_k, k\geq 1$;
\item  An {\bf $\F$-inner product} on a filtered lattice space $(V,\Lambda_V)$ is an inner product $Q:=\{Q_k\}_{k\geq 1}$ on the filtered space $V=\cup_{k\geq 1}V_k$
such that $Q_k$ is an $\F$-inner product for each $k\geq 1$. A filtered lattice space together with an $\F$-inner product is called a \bf{filtered $\F$-Euclidean lattice space}.
\end{enumerate}
\end{defn}

We now assume that $(V,\Lambda_V)=\cup_{k\geq 1} (V_k,\Lambda_k)$ is a filtered $\F$-Euclidean lattice space. Let $V_k^*:=\mathrm{Hom}(V_k,\RR)$ be the dual space of $V_k$. The $\F$-inner product $Q_k: V_k\ot V_k\to \RR $  induces an isomorphism $Q_k^*:V_k\to V_k^*$. This yields an embedding $V_k^*\hookrightarrow V_{k+1}^*$ induced from $j_k:V_k\to V_{k+1}$.
The direct limit
\begin{equation}\label{eq:Vcircledual}
V^\circledast:= \varinjlim V_{k}^*=\bigcup_{k=0}^{\infty}V_{k}^*\end{equation}
is called the {\bf filtered dual space} of $V=\bigcup_{k=0}^{\infty}V_{k}$. Notice that $V^\circledast$ is a proper subspace of the usual dual space $ V^*$ unless $V$ is finite dimensional.

\begin{defn} Let $\cup_{k\geq 1} (V_k,\Lambda_k)$ be a filtered lattice space.
\begin{enumerate}
\item
A  meromorphic germ $f(\vec \e)$ on $V^*_k\otimes \C$ is said to have {\bf  linear poles at zero with coefficients in $\F$} if there exist vectors $L_1, \cdots, L_n\in \Lambda_{V_k}\ot \F$ (possibly with repetitions) such that $f\,\Pi_{i=1}^n L_i$ is a holomorphic germ at zero whose Taylor
 expansion for coordinates in the dual basis $\{e^* _1, \cdots , e^*_k\}$ of a given (and hence every) basis $\{e _1, \cdots , e_k\}$ of $\Lambda_k$ has coefficients in $\F$.
\item
Let  $\calm_\coef  (V^*_k\otimes \C)$ denote the set  of germs of meromorphic functions on $V^*_k\otimes \C$ with linear poles at zero and with coefficients in $\coef$. It is a linear space over $\coef $.
\item A germ of meromorphic functions of the form $\frac{1}{L_1^{s_1}\cdots L_n^{s_n}}$ with linearly independent vectors $L_1,\cdots,L_n$ in $\Lambda_k\ot \F$ and $s_1,\cdots, s_n\geq 1$ is called a {\bf simplicial fraction}  with coefficients in $\F$ or simplicial $\F$-fraction. Such a fraction is called {\bf simple} if   $s_1=\cdots=s_n=1$.
\end{enumerate}
\mlabel{de:fr}
\end{defn}

Since a set of vectors in $\Lambda _k\otimes \F$ is $\F$-linearly independent if and only if it is $\R$-linearly independent in $V_k$, from now on we just call it linearly independent without specifying the type of coefficients.

Composing with the map $j^*_k:V^*_{k+1} \to V^*_{k}$ dual to $j_k:V_k\to V_{k+1}$, yields the embedding
$$\calm_\coef  (V^*_k\otimes \C)\hookrightarrow \calm_\coef  (V^*_{k+1}\otimes \C),$$
giving rise to the direct limit
$$\calm_\coef  (V^\circledast\otimes \C): = \dirlim \calm_\coef (V^*_k\ot \C)=\bigcup_{k=1}^\infty  \calm_\coef  (V^*_k\otimes \C).
$$

Let $\calm_{\coef ,+}(V^*_k\otimes \C)$ denote the space of  germs of holomorphic functions at zero in $V^*_k\otimes \C$ whose Taylor expansions at zero under the dual basis of a basis of $\Lambda  _k$ have coefficients in $\coef$. We set
$$\calm_{\coef ,+} (V^\circledast\otimes \C ) :=\bigcup_{k=1}^\infty\calm_{\coef ,+} (V^*_k\otimes \C).$$
When $\F=\RR$, we usually drop the subscript $\F$ from  the  notation.

When $V_k$ is taken to be $\RR^k$ and is equipped with its standard lattice $\ZZ^k$, the dual space $V_k^*$ is identified with $\R^k$ equipped with the standard lattice. Then the space $\calm_{\coef ,+}(\C^k)=  \calm_{\coef ,+}(V^*_k\otimes \C) $ corresponds to   the space of  germs of holomorphic functions at zero in $\C ^k$ whose Taylor expansions at zero have coefficients in $\coef$ with respect to the canonical basis of $\RR ^k$.

We next identify a linear complement of $\calm_{\coef,+}(V_k^*\ot \C)$ which is canonical upon fixing an inner product on $V_k$. It is spanned by a class of germs which then can be regarded as purely non-holomorphic germs.
More importantly, they will also serve as the building blocks for our Laurent expansions of meromorphic germs in multiple variables with linear poles. See Section~\mref{sect:decom}. For notational simplicity, we will call them \gname germs.

\begin {defn}
\mlabel{defn:polargerms}
Let $(V, \Lambda_V)= \cup_k (V_k,\Lambda_k)$ be a filtered $\F$-Euclidean lattice space with its $\F$-inner product $Q$. A {\bf \gname germ with $\coef$-coefficients} or simply a {\bf $\coef$-\gname germ} in $V^*_k\otimes \C$ is a germ of meromorphic functions at zero of the form
\begin{equation}\label{eq:polargerm}\frac {h(\ell_1, \cdots , \ell_m)}{L_1^{s_1}\cdots L_n^{s_n}},
\end{equation}
where
\begin{enumerate}
\item
$h$ lies in $\calm_{\coef ,+}(\C^m)$,
\item
$\ell_1, \cdots, \ell_m, L_1, \cdots ,L_n$  lie in $\Lambda _k\otimes \coef $, with $\ell_1,\cdots,\ell_m$ and $L_1, \cdots ,L_n$ linearly independent, such that
$$Q(\ell_i, L_j)=0 \quad \text{ for all } (i,j)\in [m]\times [n],$$
where for a positive integer $k$, we have set $[k]:=\{1,\cdots, k\}$,
\item
$s_1,\cdots, s_n$ are positive integers.
\end{enumerate}
For notational simplicity, we shall also set
$\vec{L}^{\vec s}:=L_1 ^{s_1}\cdots L_n^{s_n}$
and write  \begin{equation}
\label{eq:hL}\frac{h(\vec\ell)}{\vec L^{\vec s}}:= \frac {h(\ell_1, \cdots , \ell_m)}{L_1^{s_1}\cdots L_n^{s_n}}.
\end{equation}
\end{defn}

\begin{defn} We let $\calm_{\coef, -}^Q(V^*_k\otimes \C)$ denote the linear subspace of $\calm _\F(V^*_k\ot \C)$ spanned by $\coef$-\gname germs and set
$$\calm_{\coef , -}^Q  (V^\circledast\otimes \C): =\bigcup_{k=1}^\infty  \calm_{\coef, -}^Q(V^*_k\otimes \C)=\dirlim \calm_{\coef, -}^Q(V^*_k\otimes \C),
$$
regarding $\{\calm_{\coef, -}^Q(V^*_k\otimes \C)\}_k$ as a sub-direct system of $\{\calm_{\coef}(V^*_k\ot \C)\}_k$.
\end{defn}

\begin{remark}
\mlabel {rem:CounterExp}
The space $\calm_{\coef ,-} (V^\circledast\otimes \C )$ is not closed under the function product. As a simple example, for the canonical inner product on $\R^2$, both $f(\e_1,\e_2):=\e_1/\e_2$ and $g(\e_1,\e_2):=\e_2/\e_1$ are polar germs. But their product $1$ is not.
\end{remark}

\begin{ex}
\begin{enumerate}
\item
For linearly  independent vectors $L_1,\cdots ,L_k \in \Lambda_k\ot \coef $ and $ s_1,\cdots,  s_k> 0,$
$\displaystyle{\frac {1}{L_1^{s_1}\cdots L_k^{s_k}}}$ lies in $\calm_{\coef,-}^Q(V^*_k\otimes \C)
$  for any inner product $Q$.
\item
For the canonical Euclidean inner product on $\RR^2$, the functions $f(\e _1 e_1^* +\e _2 e^*_2)=\frac{(\e_1-\e_2)^t}{(\e_1+\e_2)^s},$ $s>0, t\geq 0,$ lie in $\calm_{\Q,-}^Q ((\RR^{2})^*\otimes \C)$.
\end{enumerate}
\end{ex}

\begin {remark} We  will mostly be working with a filtered $\F$-Euclidean lattice space given by a fixed $\F$-inner product $Q$. Thus we often  drop the superscript $Q$ to simplify notations.
\end{remark}

The following lemma shows the uniqueness of the expression of a polar germs.

\begin {lemma}
If a \gname germ can be written as $\frac {h(\ell_1, \cdots , \ell_m)}{L_1^{s_1}\cdots L_n^{s_n}}$ and $\frac {g(\ell ^\prime _1, \cdots , \ell ^\prime _j)}{(L _1^\prime)^ {t_1}\cdots (L_\ell^\prime)^{t_\ell}}$, both in a form satisfying the conditions in Definition~\mref{defn:polargerms}, then $n=\ell$ and   $L_1^\prime, \cdots, L_\ell^{\prime}$ can be rearranged in such a way that $L_i$ is a multiple of $L^\prime_i$ and $s_i=t_i$ for $1\leq i\leq n$.
\mlabel{lem:supp}
\end {lemma}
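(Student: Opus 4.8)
The plan is to prove that the denominator of a \gname germ is intrinsically determined by its polar divisor together with the order of the germ along each polar hyperplane, and to use the orthogonality condition $Q(\ell_i,L_j)=0$ to make these local invariants transparent. Write $f$ for the common germ and assume $f\neq 0$ (the zero germ being trivial). First I would record the product structure forced by orthogonality. Let $W=\mathrm{span}(L_1,\dots,L_n)\subseteq V_k$; since $L_1,\dots,L_n$ are linearly independent, $\dim W=n$, and by hypothesis each $\ell_i$ lies in the $Q$-orthogonal complement $W^\perp$. Choosing a basis of $V_k$ adapted to the splitting $V_k=W\oplus W^\perp$ and the corresponding dual coordinates $(x,y)$ on $V^*_k\otimes\C$ (with $x$ dual to $W$, $y$ dual to $W^\perp$), each $L_j$ becomes a linear form in $x$ alone and each $\ell_i$ a linear form in $y$ alone. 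Hence $f=P(x)\,H(y)$, where $P(x)=1/(L_1^{s_1}\cdots L_n^{s_n})$ and $H(y)=h(\ell_1,\dots,\ell_m)$ is holomorphic; since $f\neq 0$ we have $h\not\equiv 0$, so $H\not\equiv 0$.

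Next I would analyze the polar divisor via valuations. Because the $L_j$ are pairwise non-proportional, the hyperplanes $H_j:=\{L_j=0\}$ are distinct. For an irreducible hyperplane $H\subseteq V^*_k\otimes\C$ let $v_H$ denote the order (the discrete valuation) of a meromorphic germ along $H$; this is an intrinsic invariant of $f$, independent of any chosen expression. The crucial point, and the place where orthogonality is used, is that $H(y)$ depends on the $y$-variables only while $H_j$ is cut out by an equation in $x$; thus $H(y)$ cannot vanish identically on $H_j$, so $v_{H_j}(h)=0$. Since moreover $v_{H_j}(L_i)=\delta_{ij}$ (for $i\neq j$ the form $L_i$ is non-proportional to $L_j$ and hence nonzero at a generic point of $H_j$), we obtain $v_{H_j}(f)=-s_j<0$, whereas $v_{H}(f)\ge 0$ for every hyperplane $H$ distinct from $H_1,\dots,H_n$, as $f$ is holomorphic off $\bigcup_j H_j$. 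Consequently the polar hyperplanes of $f$, namely those $H$ with $v_H(f)<0$, are exactly $H_1,\dots,H_n$, and $v_{H_j}(f)=-s_j$.

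Running the identical computation on the second expression shows that the polar hyperplanes of $f$ are exactly $\{L'_1=0\},\dots,\{L'_\ell=0\}$, with $v_{\{L'_j=0\}}(f)=-t_j$. Comparing the two intrinsic descriptions of the polar set forces $n=\ell$ together with a bijection between the two families of polar hyperplanes; after reordering, $\{L_i=0\}=\{L'_i=0\}$, i.e.\ $L_i$ is a nonzero scalar multiple of $L'_i$, and matching the corresponding valuations gives $s_i=t_i$ for all $i$. This is exactly the asserted conclusion.

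The main obstacle is justifying $v_{H_j}(h)=0$, that is, the non-vanishing of the numerator along each polar hyperplane. Without the orthogonality $Q(\ell_i,L_j)=0$ the numerator could share directions with the denominator and cancel or lower a pole, which would wreck both the identification of the polar divisor and the matching of exponents. Orthogonality is precisely what produces the product structure $f=P(x)\,H(y)$ and forces $h$ to be constant along the $x$-fibres, making the valuation computation immediate. (Equivalently, one could phrase the whole argument through the supporting cone spanned by $L_1,\dots,L_n$ and invoke Lemma~\ref{lem:gencone}, but the valuation-theoretic formulation above appears to be the most direct route.)
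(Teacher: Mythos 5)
Your proof is correct, but it follows a genuinely different route from the paper's. The paper argues by induction on the total pole order $s=s_1+\cdots+s_n$: clearing all factors except $L_1^{s_1}$ and viewing both sides as Laurent expansions in the variable $L_1$, it notes that if no $L'_j$ were proportional to $L_1$, the right-hand side would be holomorphic in $L_1$ while the left-hand side is not (here, exactly as in your argument, orthogonality guarantees that $h$ is a nonzero function of variables independent of $L_1$); this forces $L_1=cL'_1$ after reordering, one factor of $L_1$ is cancelled from each side, and the induction hypothesis finishes the proof. You instead identify the polar divisor intrinsically: since $\mathcal{O}_{\C^k,0}$ is a UFD in which non-proportional linear forms are non-associate primes, the valuations along hyperplane germs are well defined on meromorphic germs, and the orthogonal splitting $f=P(x)H(y)$ gives $v_{\{L_j=0\}}(h)=0$, hence $v_{\{L_j=0\}}(f)=-s_j$ and $v_{H}(f)\geq 0$ for every other hyperplane $H$; comparing the two representations then yields the bijection and the equality of exponents in one step, with no induction. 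Your version is more conceptual---it makes explicit that the content of the lemma is the intrinsic nature of the polar divisor---at the cost of invoking standard facts about the local ring of holomorphic germs (UFD property, primality of linear forms), whereas the paper's induction is elementary and self-contained. Both arguments hinge on the same key point, namely that orthogonality prevents the numerator from vanishing along any polar hyperplane. One remark common to both: the statement implicitly assumes the germ is nonzero (for the zero germ, with $h=g=0$, the conclusion fails since the denominators are unconstrained), an assumption you flag explicitly and the paper leaves tacit.
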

\begin {proof} We implement an   induction on $M:=\max(s_1+\cdots +s_n, t_1+\cdots +t_\ell)$.

To deal with the case when $M=1$, assume $ \frac {h(\ell _1, \cdots, \ell _m )}{L }$ equals $\frac {g(\ell ^\prime _1, \cdots , \ell ^\prime _j)}{L ^\prime}$ or $g(\ell ^\prime _1, \cdots , \ell ^\prime _j)$, with $h$ and $g$ holomorphic. Extend $\{L, \ell _1, \cdots , \ell _m\}$ to a basis $\{ z_1, \cdots, z_{m+1}, \cdots , z_k\}$, with $z_1=L, z_2=\ell _1, \cdots, z_{m+1}=\ell _m$.
If $ \frac {h(\ell _1, \cdots, \ell _m )}{L }=\frac {g(\ell ^\prime _1, \cdots , \ell ^\prime _j)}{L ^\prime}$, and $L'$ is not a multiple of $L$, then
$$L'(z_1,z_2, \cdots , z_k)=L^{\prime \prime}(z_2, \cdots , z_k)+c z_1,$$
with $L^{\prime \prime}(z_2, \cdots , z_k)$ not identically zero, so we can pick $z_2^0, \cdots z_{k}^0$, such that $h(z_2^0, \cdots, z_{m+1}^0)\not=0$ and $L^{\prime \prime}(z^0_2, \cdots , z^0_k)\not =0$. Consider the restriction to $(z_1, z_2^0, \cdots, z_{k}^0)$ of the equality $ \frac {h(\ell _1, \cdots, \ell _m )}{L }=\frac {g(\ell ^\prime _1, \cdots , \ell ^\prime _j)}{L ^\prime}$.
The left hand side of the equality is singular in $z_1$ while the right hand side is holomorphic in $z_1$. This is a contradiction, showing that $L$ must be a multiple of $L'$.
The same argument shows that $ \frac {h(\ell _1, \cdots, \ell _m )}{L }=g(\ell ^\prime _1, \cdots , \ell ^\prime _j)$ is impossible.

For the inductive step,
suppose that none of the linear forms $L _1^{\prime}, \cdots, L_\ell^{\prime}$  is a multiple of $L_1$. As in the case for $M=1$, when
$$\frac {h(\ell_1, \cdots, \ell_m)}{L_1^{s_1}}=\frac {g(\ell ^\prime _1, \cdots , \ell ^\prime _k)L_2^{s_2}\cdots L_n^{s_n}}{(L _1^{\prime})^ {t_1}\cdots (L_\ell^{\prime})^ {t_\ell}}$$
is restricted to a proper choice of $z_2^0, \cdots, z_{k}^0$, the left hand side of the equality has a non-trivial singular part in $z_1$, while the right hand side is holomorphic in $z_1$, which  leads to a contradiction. Therefore we can rearrange $L_1^{\prime},\cdots, L_\ell^{\prime}$ so that
$L_1=cL^\prime _1 $
for some constant $c\neq 0$. Thus from
$$\frac {h(\ell_1, \cdots , \ell_m)}{L_1^{s_1}\cdots L_n^{s_n}}=
\frac {g(\ell ^\prime _1, \cdots , \ell ^\prime _j)}{(L _1^\prime)^ {t_1}\cdots (L_\ell^\prime)^ {t_\ell}},$$
we obtain
$$\frac {h(\ell_1, \cdots , \ell_m)}{L_1^{s_1-1}\cdots L_n^{s_n}}=
\frac {cg(\ell ^\prime _1, \cdots , \ell ^\prime _j)}{(L _1^\prime)^ {t_1-1}\cdots (L_\ell^\prime)^ {t_\ell}}.$$
By the inductive hypothesis, the conclusion holds for the two sides of the equation. This completes the induction.
\end{proof}

\subsection{Decomposition of a meromorphic germ into polar germs}
In this subsection we consider any lattice space $(V,\Lambda)$ which can be taken to be $(V_k,\Lambda_k)$ from a filtered lattice space. The notions for $V_k$ such as $\calm_\F(V_k^\ast\ot \C)$ and $\calm_{\coef,\pm}(V_k^\ast\ot \C)$ can be defined in the same way for $V$.

Before giving the decomposition, we first provide some preliminary results.

\begin{lemma} Let $(V,\Lambda)$ be a lattice space. Let $L_1,\cdots,L_n, n\geq 2,$ be  vectors in $\Lambda\ot \F$ and let $s_1,\cdots, s_n$  be positive integers.
\begin{enumerate}
\item  If  $L_1,\cdots,L_n$ are $\F$-linearly independent and $L_{n+1}=\sum_{i=1}^n c_i L_i$ with nonzero $c_i\in \F, 1\leq i\leq n$. Then
\begin{equation}
\frac{1}{L_1^{s_1}\cdots L_{n+1}^{s_{n+1}}}= \sum_j \frac {b_j}{N^{t_{j1}}_{j1}\cdots N^{t_{jn}}_{jn}},
\mlabel{eq:frind2}
\end{equation}
where, for each $j$, $b_j$ is in $\coef$ and $\{N_{j1},\cdots,N_{jn}\}$ is one of the sets $\{L_1,\cdots,\widehat{L_i},\cdots,L_{n+1}\}, 1\leq i\leq n$ (where $\widehat{L}_i$ means that the factor $L_i$ is omitted) and hence is a basis of the linear span $\lin \{L_1,\cdots,L_{n+1}\}$ of $L_1,\cdots,L_{n+1}$.
\mlabel{it:dec1}
\item  In general,  the fraction $\frac{ 1}{L_1^{s_1}\cdots L_n^{s_n}}$ can be rewritten as a linear combination
 $$\sum_i \frac {a_i}{M^{t_{i1}}_{i1}\cdots M^{t_{in_i}}_{in_i}},
$$
with $a_i\in \coef $ and linear independent subsets $\{M_{i1}, \cdots, M_{in_i}\}$ of $\{L_1,\cdots,L_n\}$.
\mlabel{it:dec2}
\end{enumerate}
\mlabel{lem:merodec}
\end{lemma}

\begin{proof}
(\mref{it:dec1})
The statement easily follows from  the straightforward identity
$$\frac 1{L_1\cdots L_{r}}=\sum_{i=1}^r\frac {c_i}{L _1\cdots \widehat{L}_i\cdots L_rL_{r+1}},
$$ by induction on the sum $m:=\sum_{j=1}^{r} s_j$.
\smallskip

\noindent
(\mref{it:dec2}) Combining factors of linear forms that are multiples of each other if necessary, we can assume that  the $L_i$'s are not  multiples of each other. The statement then follows from an induction on the difference $d:=n-\dim ( \lin\{L_1,\cdots,L_n\})$ using  Eq.~(\mref{eq:frind2}) applied to a subset $L_{i_1},\cdots,L_{i_r}$  of  linearly independent forms such that $L_{i_{r}+1}=\sum_{j=1}^r c_j L_{i_j}$ for some $2\leq r\leq n$.
\end{proof}

We are now ready to prove the existence of a decomposition of meromorphic germs at zero into a sum of holomorphic germs and polar germs.

\begin{thm}
\mlabel{thm:surj}  Let $(V, \Lambda)$ be an $\F$-Euclidean lattice space with an $\F$-inner product $\Q$.
For any $f\in \calm_{\coef}(V^*\otimes \C)$,  there exists a finite set of $\F$-polar germs  $\{S_j\}_{j\in J}$ and a holomorphic germ $h$ in $\calm_{\coef}(V^*\otimes \C)$ such that
\begin{equation}\label{eq:existencedec}f=\left(  \sum_{j\in J} S_j\right)+\, h.
\end{equation}
Furthermore, the $\F$-polar germs $S_j$ can be chosen to satisfy the following properties.
\begin {itemize}
\item their linear poles are taken from the linear poles of $f$.
\item
if the germ $f$ can be written in the form $\tilde f(\ell_1, \cdots, \ell_n)$ for some function $\tilde f$ on $\C ^n$ and linearly independent linear forms $\ell_1,\cdots, \ell_n$ on $(\Lambda\ot \F)^*$, then the polar germs $S_j$, and the holomorphic germ $h$ can be written as compositions of functions on $\C ^n$ and linearly independent linear forms in ${\rm span}( \ell_1,\cdots, \ell_n)$.
\end{itemize}
\end{thm}

\begin{remark} Whereas the holomorphic part will turn out to be uniquely defined, the individual polar germs   arising in this decomposition are not. In Corollary~\mref{coro:uniqueness}
we provide geometric conditions under which the polar germs can be uniquely determined, leading to Laurent expansions.
\end{remark}

\begin{proof} Thanks to Lemma~\mref{lem:merodec}.(\mref{it:dec2}), without loss of generality we can reduce the proof to meromorphic germs at zero of the form
$$f=\frac {h}{L^{s_1}_{1}\cdots L_{m}^{s_m}}
$$
with $h\in \calm  _{\F,+}(V^*\otimes \C)$, $L_1,\cdots,L_m\in \Lambda \otimes \coef$ linearly independent  and $s_1, \cdots, s_m$ positive integers. Then we
extend $\{L_1, \cdots , L_m\}$ to a basis $\{L_1, \cdots , L_m, \ell_1, \cdots , \ell_{k-m}\}$ of $\Lambda \otimes \F$ satisfying
$$Q(L_i,\ell_j)=0, \quad 1\leq i\leq m, 1\leq j\leq k-m.
$$

We proceed by induction on the sum $s:=s_1+\cdots+s_m$. If $s=1$, then $m=1$ and $s_1=1$.
Under these conditions we have
$$f=\frac{h(L_1, \ell _1, \cdots , \ell _{k-1} )}{L_{1}}= \frac {h(0, \ell _1, \cdots , \ell _{k-1})}{L_1}+\frac {h(L_1, \ell _1, \cdots , \ell _{k-1})-h(0, \ell _1, \cdots , \ell _{k-1})}{L_1}.$$  The first term
  lies  in $\calm _{\F,-}(V^*\otimes \C)$ as a consequence of the orthogonality of $L_1$ with the $\ell_i$'s.
 The second term is holomorphic at $0$. This yields the required decomposition.

For $t\geq 1$, assume that the decomposition exists when $s\le t$ and consider  $f=\frac {h(L_1,\cdots,L_m,\ell_1,\cdots,\ell_{k-m})}{L^{s_1}_{1}\cdots L_{m}^{s_m}}$ with $s=s_1+\cdots +s_m=t+1$. We note that~\cite{GF} the Taylor expansion of $h$ gives
$$h(L_1,\cdots,L_m,\ell_1,\cdots,\ell_{k-m}) :=h(0,\cdots,0,\ell_1,\cdots,\ell_{k-m}) +\sum_{i=1}^m L_i g_i,$$
where the $g_i$'s are holomorphic.
Thus $S_0:=h(0,\cdots,0,\ell_1,\cdots,\ell_{k-m})/(L^{s_1}_{1}\cdots L_{m}^{s_m})$ is in  $\calm_{\F,-}(V^\ast\otimes \C)$ while by the induction hypothesis,
$(L_ig_i)/(L^{s_1}_{1}\cdots L_{m}^{s_m})=h_i+\sum_{j_i\in J_i} S_{j_i} $ with $h_i$ a holomorphic germ at zero and $S_{j_i}$ polar germs in $\calm_\coef(V^\ast \ot \C)$.
 Hence
$f=S_0+\sum_{i=1}^m h_i+\sum_{ j\in \cup_{i=1}^mJ_i} S_{j}$ is the sum of a holomorphic germ $\sum_{i=1}^m h_i$ and finitely many polar germs $S_j$.

Now for a germ $f$ expressed in the form $\tilde{f}(\ell_1,\cdots,\ell_n)$ as given in the theorem, replace the lattice space $(V,\Lambda)$ by its lattice subspace $(W,\Lambda\cap W)$ where $W:=\text{span}(\ell_1,\cdots,\ell_n)$. Then $f$ is in $\calm_\coef(W^\ast\ot \CC)$ and applying the first part of the theorem yields the second part of the theorem.
\end{proof}

\section{A geometric criterion for non-holomorphicity}
\mlabel{sec:nonholo}
In this section, we pursue our geometric approach initiated in \cite{GPZ1} to study meromorphic germs at zero through the cones associated to the germs.
By means of the  supporting cone of a polar germ, we first give a geometric criterion for the linear independence of simplicial fractions in Section~\mref{ss:geo1}. We then obtain the main Non-holomorphicity Theorem in Section~\mref{ss:geo2}.

\subsection{A geometric criterion for the linear independence of simplicial fractions}
\mlabel{ss:geo1}
We briefly recall the notations  and terminology of ~\cite{GPZ1} and  use the results  obtained there on the geometry of cones underlying the decomposition of fractions, further refined to require  that the coefficients  lie in the subfield $\F$.

As in ~\cite{GPZ1}  we consider  {\bf closed convex polyhedral cones}  henceforth simply called  {\bf cones} in a filtered lattice space $(V,\Lambda_V)=\cup_{k\geq 1} (V_k,\Lambda_{V_k})$.
 We call {\bf $\F$-cones} the ones whose generators lie   in $\Lambda _k \otimes \F$. A $ \QQ $-cone  is called {\bf rational}.

We recall that a {\bf subdivision} of a cone $C$ is a set $\{C_1,\cdots,C_r\}$ of cones which have the same dimension as $C$,  whose union is $C$,  and that intersect along their faces, i.e.,  $C_i\cap C_j$ is a face of both $C_i$ and $C_j$.
Such a subdivision is  {\bf simplicial} (resp. {\bf smooth}, in the case when $C$ is rational) if all $C_i$'s are simplicial (resp. smooth). An {\bf $\F$-subdivision} of an $\F$-cone is a subdivision such that every $C_i$ is an $\F$-cone.

On the grounds of Lemma~\mref{lem:supp}, we can assign a simplicial cone to a \gname germ.

\begin {defn}\mlabel{defn:suppcone}
Let
$$f:=\frac {h(\ell_1, \cdots , \ell_m)}{L_1^{s_1}\cdots L_n^{s_n}},
$$
be a polar germ, as defined by the conditions in Definition~\mref{defn:polargerms}. We  shall say that the cone $\cone{L_1, \cdots , L_n}$  {\bf supports}  the germ; it is a {\bf supporting cone of the germ}.
\end{defn}

By Lemma~\mref{lem:supp}, the supporting cones of a polar germ is defined up to the choice of a sign of each of the vectors $L_1,\cdots,L_n$. Indeed, any cone $\cone{\pm L_1, \cdots ,\pm L_n}$  delimited by the hyperplanes  in the hyperplane arrangement  $\{H_1,\dots, H_n\}$ with $H_i=\{L_i=0\}$ is also a supporting cone.
For example, in the standard Euclidean space $\RR^2$, the polar germ $\frac{1}{\e_1 \e_2}$ has four supporting cones given by the four quadrants of the Euclidean plane, cut out by the two lines spanned by  the basis vectors $e_1$ and $e_2$ respectively.

We now introduce the key concepts concerning families of cones.

\begin {defn}
\begin{enumerate}
\item
A family of cones is said to  be  {\bf   properly positioned} if the cones meet along faces and the union does not contain any nonzero linear subspace.
\item
A family of polar germs  is called {\bf properly positioned} if there is a choice of a supporting cone for each of the polar germs such that the resulting family of cones is properly positioned.
\item
A family of polar germs   is called {\bf \ppp} if it is properly positioned and none of the denominators of the polar germs is proportional to another.
\end{enumerate}
\label{defn:ppp}
\end{defn}

So a family of polar germs   is \ppp if it is properly positioned when viewed in the projective space of polar germs (that is,  modulo scalar multiples), hence the terminology.
It would be interesting to find a simple criterion for a \ppp family of polar germs.

We next give a reinterpretation of a related result in~\cite{GPZ1} for which we recall some preliminary notations and results. See also~\cite {BV1,GP,La}.

Let $C$ be a simplicial cone in $V_k$ with $\RR$-linearly independent generators $v_1, \cdots v_n$ expressed in a fixed basis $\{e_1,\cdots,e_k\}$  as $v_i=\sum\mlimits_{j=1}^k a_{ji}e_j $, for $1\leq i\leq n$.  Define linear functions $L_i, 1\leq i \leq n,$ on $V_k^*\ot \C$ by $L_i(\vec \e):=L_{v_i}(\vec \e):=\sum\mlimits_{j=1}^k a_{ji}\e _j$, where $\vec \e:=\sum\mlimits_{i=1}^k \e_je_j^* \in V_k^*\otimes \C $ and $\{e_1^*, \cdots, e_k^* \}$ is the dual basis in $V_k^*$. Let $A_C=[a_{ij}]$ denote the associated matrix in $M_{k\times n}(\RR)$.
Let $w(v_1, \cdots,
v_n)$ or $w(C)$ denote the sum of absolute values of the determinants of all minors of $A_C$ of rank $n$.
As in~\mcite{GPZ1} except a different notation $\Phi$ instead of $I$ and a sign convention, define
\begin{equation}
I (C):=(-1)^n\frac
{w(v_1, \cdots , v_n)}{L_1\cdots L_n}.
\mlabel{eq:phi0}
\end{equation}

Let $C$ be a cone in $V_k$ and let $\{C_i\}$ be a simplicial subdivision of $C$. By \cite[Lemma~3.3]{GPZ1}, the sum
\begin{equation}
 I(C):=\sum_i I(C_i)
\mlabel{eq:phi1}
\end{equation}
is well-defined, independent of the choice of simplicial subdivisions, hence yielding a linear map
\begin{equation}\mlabel{eq:Int}
I: \R\gc (\RR )\to \calm_{\coef, -} (V^\circledast \otimes \C),
\end{equation}
 where $\R\gc (\RR )$ is the $\R $-linear space spanned by the set $\gc (\RR )$ of cones in $V$.

Now we are ready for our first geometric criterion for the linear independence of fractions.

\begin{lemma}
A \ppp family of simple  $\F$-fractions whose supporting cones span the same linear subspace is $\F$-linearly independent.
\mlabel{lem:gencone2}
\end{lemma}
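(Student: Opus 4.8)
The plan is to deduce the statement directly from Lemma~\ref{lem:supplinind} by recognizing each \simplefraction as a positive scalar multiple of the fraction $\Phi$ attached to its supporting cone.

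First I would write each of the given \simplefractions $f_i$ in standard form $f_i=\tfrac{1}{L_1^{(i)}\cdots L_{n}^{(i)}}$, with the linear forms $L_j^{(i)}$ pseudo-positive and linearly independent. By the remark following the definition of the supporting cone, rescaling the $L_j^{(i)}$ to make them pseudo-positive multiplies $f_i$ only by a nonzero scalar, and hence does not affect linear (in)dependence. The supporting cone of $f_i$ is then $C_i=\cone{L_1^{(i)},\cdots,L_n^{(i)}}$, which is simplicial precisely because its generators are linearly independent. By hypothesis the family $\{C_i\}$ is properly positioned and every $C_i$ spans the same linear subspace, which are exactly the geometric conditions required in Lemma~\ref{lem:supplinind}.

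Next I would use the defining formula~\eqref{eq:phi0}, namely $\Phi(C_i)=\tfrac{w(C_i)}{L_1^{(i)}\cdots L_n^{(i)}}=w(C_i)\,f_i$, where the weight $w(C_i)$ is the sum of absolute values of the maximal minors of the generator matrix $A_{C_i}$ and is therefore strictly positive. Hence each $f_i$ is a positive scalar multiple of $\Phi(C_i)$, so that $\{f_i\}$ is linearly independent if and only if $\{\Phi(C_i)\}$ is.

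Finally, since the $C_i$ are simplicial, properly positioned, and span a common linear subspace, Lemma~\ref{lem:supplinind} yields the linear independence of $\{\Phi(C_i)\}$, and therefore of $\{f_i\}$. The argument is essentially a translation, so I do not expect a substantial obstacle; the only points demanding care are confirming that the supporting cones are genuinely simplicial (immediate from the linear independence of the $L_j^{(i)}$, together with the strong convexity guaranteed by Lemma~\ref{lem:pseudo}) and that a properly positioned family of \simplefractions corresponds exactly to a properly positioned family of cones, which is built into the definition of the supporting cone.
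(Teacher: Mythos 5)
Your proof is correct and takes essentially the same route as the paper: the paper offers no separate argument for this lemma, explicitly presenting it as a restatement of Lemma~\ref{lem:supplinind} via the correspondence between a \simplefraction in standard form and its supporting cone, which is exactly the translation you spell out (with $\Phi(C_i)=w(C_i)f_i$ and $w(C_i)>0$). The details you supply --- rescaling to pseudo-positive linear forms, strict positivity of the weight, and simpliciality of the supporting cones --- are precisely what the paper leaves implicit.
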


\begin{proof}
We choose the supporting cones $\{C_i\}$ in such a way that the family is properly positioned. Since the simple $\F$-fractions are not pairwise proportional, these supporting cones are distinct. Since the cones are properly positioned, their union does not contain any nonzero linear subspace. Thus the union of the cones has a topological boundary. Thus by~\cite[Lemma~3.5]{GPZ1}, the set $\{I(C_i)\}$ is linearly independent. But each $I(C_i)$ is a nonzero multiple of the original fraction. Thus the original family of simple fractions is linearly independent.
\end{proof}

Before the treatment of more general fractions, we give the following ``locality" lemma.

\begin{lemma}
Let $\frac {h_i}{ \vec L_i^{\vec s_i}}, \ i=1,\cdots, r$
be $\F$-polar germs and $h_0$ a holomorphic germ at zero satisfying \begin{equation}
\mlabel{eq:ce}
\sum\mlimits_{i=1}^r a_i\frac {h_i}{\vec L_i^{\vec s_i}}=h_0
\end{equation}
with $a_1,\cdots, a_r\in \F$.
For any linear $\F$-subspace $W$ of $V$ and $N \in \ZZ _{>0}$, denote
$$I(W,N):=\{i\in[r]\, |\, {\rm span}(L_{i1}, \cdots, L_{in_i})=W, \vert s_i\vert: =s_{i1}+\cdots +s_{in_i}=N \}.
$$
Then
$$\sum\mlimits_{i\in I(W,N)} a_i\frac {h_i}{\vec L_i^{\vec s_i}}=0,$$
with the convention that the sum over an empty set is zero.
\mlabel{lem:SepSpace}
\end{lemma}

\begin {proof} For  distinct pairs $(W,N)$ and $(W',N')$ arising in the expression Eq.~(\mref{eq:ce}) we have $I(W,N)\cap I(W',N')=\emptyset$. Thus
$[r]$ is partitioned into finitely many non-empty and disjoint subsets $I(W_1, N_1)$, $\cdots$, $I(W_p, N_p)$.
Then
$$\sum _{j=1}^p\sum _{i\in I(W_j,N_j)}a_i\frac {h_i}{\vec L_i^{\vec s_i}}=\sum _{i=1}^ra_i\frac {h_i}{\vec L_i^{\vec s_i}}=h_0.
$$

Suppose that an expression in Eq.~(\mref{eq:ce}) is a counter example to the lemma. Then
\begin{equation}
\sum _{i\in I(W_j,N_j)}a_i\frac {h_i}{\vec L_i^{\vec s_i}}\neq 0
\mlabel{eq:nzero}
\end{equation}
for some $j\in[p].$
By dropping those $j\in [p]$ with $\sum _{i\in I(W_j,N_j)}a_i\frac {h_i}{\vec L_i^{\vec s_i}}= 0$ if necessary, we can assume that Eq.~(\mref{eq:nzero}) holds for all $j\in[p].$

Let
$N:= \max\{ \vert s_i\vert\,|\, i\in [r]\}$ and let $W$ be one of those ${\rm span}(L_{i1}, \cdots, L_{in_i})$ with $\vert s_i\vert=N$ whose dimension is minimal.
Reordering the terms of  the sum in Eq.~(\mref{eq:ce}) if necessary, we can assume that $I(W,N)=[t]$ for some $t\geq 1$. Thus $|s_i|=N$ and ${\rm span}(L_{i1}, \cdots, L_{in_i})=W$ precisely for $i\in [t]$.

We extend the linearly independent linear forms $L_{11},\cdots,L_{1n_1}$ to a basis $e_1,\cdots,e_k$ of $\Lambda_k\ot \F$ with $e_i=L_{1i}$ for $i\in [n_1]$ such that $Q(e_j,e_\ell)=0$ for $1\leq i\leq n_1, n_1+1\leq j\leq k$. Write the polar germs $\frac{h_i}{\vec{L_i}^{\vec{s_i}}} =\frac{h_i(\ell_{i1},\cdot,\ell_{im_i})}{L_{i1}^{s_{i1}}\cdots L_{in_i}^{s_{in_i}}}$ as in Definition~\mref{defn:polargerms}. Since $e_1,\cdots,e_{n_1}$ is also a basis of ${\rm span}(L_{i1}, \cdots, L_{in_i})$ for $i\in [t]$, we have
$$Q(e_j,\ell_{ij})=0, 1\leq j\leq n_i, 1\leq \ell\leq m_i, 1\leq i\leq t.$$
So the linear forms $\ell_{i1},\cdots,\ell_{im_i}$ lie in ${\rm span}(e_{n_1+1},\cdots,e_k)$.
Thus with respect to the dual basis $\{e_1^*, \cdots, e^*_k\}$ of the basis $\{e_1,\cdots,e_k\}$ of $V_k\otimes \C$, the functions $h_1 (\ell_{1\,1},\cdots,\ell_{1\,m_1}), \cdots, h_{t}(\ell_{t\,1},\cdots,\ell_{t\,m_t})$ as functions in the variables $\vec \e =\sum \e _i e_i^*$ are in fact germs at zero of holomorphic functions depending only on the variables $\e_{n_1+1}, \cdots \e_k$, which we write as ${h}_1(\e_{n_1+1},\cdots,\e_k),\cdots,{h}_t(\e_{n_1+1},\cdots,\e_k)$.

Fix $i>t$. For any $j\in [n_i]$, write
$L_{ij}=L_{ij}'+L_{ij}^{\prime \prime},$
where $L_{ij}'$ is a linear combination of $e_1, \cdots, e_{n_1}$ and $L_{ij}^{\prime \prime}$ is a linear combinations of $e_{n_1+1}, \cdots e_k$. Thus $L_{ij}^{\prime\prime}(\vec \e)$ is a linear function in $\e_{n_1+1},\cdots,\e_k$. We note that $i>t$ if and only if either $\sum_{j=1}^{n_i} s_{ij}<N$, or there is an index  $j$ such that $L_{ij}^{\prime\prime}\neq 0$ as a result of the fact that $\{L_{i1},\cdots,L_{in_i}\}$ and $\{L_{11},\cdots,L_{1n_1}\}$  do not span the same linear space.

Since $h_i(\e_{n_1+1} , \cdots, \e_k ) , 1\leq i\leq t,$ are not identically zero, there are fixed values  $\e_{n_1+1}^0, \cdots, \e_k^0$ of $\e_{n_1+1}, \cdots, \e_k$ for which $h_i(\e_{n_1+1}^0, \cdots, \e_k^0)\not =0, 1\leq i\leq t,$  and   $L_{ij}^{\prime \prime}(\e_{n_1+1}^0, \cdots \e_k^0)\not =0, i>t$ for those $L_{ij}^{\prime\prime}\neq 0$. These values form a non-empty open subset.

We next introduce a new set of variables $r_m, 1\leq m\leq n_1,$ and $\varepsilon$, and apply the substitution $(\e_1,\cdots,\e_k) =(r_1\varepsilon,\cdots,r_{n_1}\varepsilon,$ $\e_{n_1+1}^0,\cdots,\e_k^0)$
in Eq.~(\mref{eq:ce}). This gives rise to a Laurent series in $\varepsilon$ that is holomorphic at zero  by the choice of the germ. Thus the coefficient of every given negative power of $\varepsilon$ is $0$. In particular the coefficient of the least possible power $\varepsilon ^{-N}$ is zero.
In order for a term $h_i/L_{i1}^{s_{i1}}\cdots L_{in_i}^{s_{in_i}}$ in the sum to contribute to this coefficient, we must have $\sum_{j} s_{ij}=N$ and $L^{\prime\prime}_{ij}=0$, that is, $1\leq i\leq t$ as a result of the definition of $t$. On the other hand, for $1\leq i\leq t$, $L_{i1},\cdots,L_{in_i}$ are linear homogeneous in $L_{11}(\vec \e)=\e_1,\cdots,L_{1n_1}(\vec \e)=\e_{n_1}$. Hence under the above substitution, they give $\e L_{i1},\cdots,\e L_{in_i}$ in the variables $r_1,\cdots,r_{n_1}$ and  the coefficient of $\e^{-N}$ in the Laurent series reads
$$\sum_{i=1}^t a_i\frac
{h_i(\e_{n_1+1}^0,\cdots,\e_k^0)}{L_{i1}^{s_{i1}}\cdots L_{in_i}^{s_{in_i}}}.$$
Hence it is zero as a sum of fractions in variables $r_1,\cdots,r_{n_1}$.
Thus
\begin{equation}\label{eq:sumtzero}\sum_{i=1}^t a_i\frac
{h_i(\e_{n_1+1}^0,\cdots,\e_k^0)}{L_{i1}^{s_{i1}}\cdots L_{in_i}^{s_{in_i}}}=0
\end{equation}
for any generic point $(\e_{n_1+1}^0,\cdots,\e_k^0)$. Comparing with Eq.~(\mref{eq:nzero}) gives the desired contradiction.
\end{proof}

Based on this lemma, Lemma~\mref {lem:gencone2} can be generalized to the following statement.

\begin{prop}
A \ppp family of simplicial  $\F$-fractions is $\F$-linearly independent.
\mlabel{lem:LinearlyIndep}
\end{prop}

\begin{proof}
We only need to prove that a contradiction follows from any
linear relation
\begin{equation}
\sum_{i=1} ^r a_i \frac{1}{L_{i1}^{s_{i1}}\cdots L_{in_i}^{s_{in_i}}}=0, \quad 0\neq a_i\in \coef,
\mlabel{eq:contrel}
\end{equation}
of a \ppp family of $\F$-fractions
$G_i:=\frac{1}{L_{i1}^{s_{i1}}\cdots L_{i n_i}^{s_{in_i}}}$.

By Lemma~\mref {lem:SepSpace}, we can assume that, for each $1\leq i\leq r$, the weight $\vert s_i\vert =s_{i1}+\cdots +s_{in_i}$ is the same and the linear forms in the denominators span the same space. In particular, $n_1=\cdots =n_r=n$.

We next proceed by induction on $s:=\vert s_i \vert$. So $s\geq n$.
If $s=n$, then the powers of all the linear forms are equal to $1$. It then follows from  Lemma~\mref {lem:gencone2} that  $a_i=0$ for all indices $i$, leading to the expected contradiction. Assume that a contradiction arises for any relation in Eq.~(\mref{eq:contrel}) with $s= N\geq k$ and consider such a relation with $s=N+1$. In this case, at least one linear form, say $L_{1}$, has exponent greater than one.

Let $r_1$ be the maximal power of $L_{1}$ in all the \multiplefractions $G_i, 1\leq i \leq r$. We split these fractions into three disjoint sets. Let $G_1, \cdots, G_m$ be all the \multiplefractions with $L_{1}$ raised to the power of $r_1$. Let $G_{m+1},\cdots,G_{m+\ell}$ be all the \multiplefractions, if any, with $L_{1}$ raised a positive power less than $r_1$. Let $G_{m+\ell+1},\cdots,G_r$ be all the \multiplefractions, if any, that do not contain $L_1$ in their denominator. Thus
$$0=L_1\sum_{i=1}^r a_i   G_i= \sum_{i=1}^m a_i L_{1} G_i + \sum _{i=m+1}^{m+\ell}  a_iL_{1}  G_i + \sum _{i=m+\ell+1}^r  a_i L_{1}G_i.
$$
For any $m+1\leq i\leq m+\ell$, the power of $1/L_{1}$ in $L_1G_i$ is less than $r_1-1$.  Since the linear forms in the denominators are assumed to span the same spaces, we  write $L_{1}$ as a linear combination of the linear forms $L_{i1},\cdots,L_{in_i}$ of $G_i$ for $m+\ell +1\le i \le r$:
$$L_{1}=a_{i1}L_{ i1}+\cdots +a_{in_i}L_{in_i}.
$$
Thus each $L_{1}G_i=\sum\mlimits_{j_i=1}^k \frac{a_{ij_i}}{L_{i1}^{s_{i1}}\cdots L_{ij_i}^{s_{ij_i}-1}\cdots L_{in_i}^{s_{in_i}}}$ for $m+\ell+1\leq i\leq r$ is a linear combination of fractions that do not contain $L_1$  as a linear form in the denominator. In summary, each fraction in $\sum\mlimits_{i=m+1}^r a_iL_1 G_i $ has its power of $1/L_{1}$ less than $r_1-1$, so that no such monomial can  cancel  with any fraction in $\sum\mlimits_{i=1}^m a_iL_1 G_i$.

With the notation of Lemma~\mref {lem:SepSpace}, in
$\sum\mlimits_{i=1}^r a_iL_{1}G_i=0$
we can use the space spanned by $L_{11}, \cdots , L_{1n_1}$ to single out an equation
$$\sum\mlimits_{i=1}^m a_iL_{1}G_i+\sum\mlimits_{i=m+1}^{m+\ell} a_i'L_{1}G_i+\cdots=0.$$
In this equality, \multiplefractions $L_{1} G_1, \cdots, L_{1}G_m$ have non-zero coefficients $a_1, \cdots, a_m$, and the weight of each terms in the sum is $N$. Thus by the induction hypothesis we must have $a_i=0, i=1, \cdots m$, which yields the expected contradiction.
\end{proof}

\subsection{The non-holomorphicity of polar germs }
\mlabel{ss:geo2}

Based on Proposition \ref{lem:LinearlyIndep}, we prove the following non-holomorphicity of \gname germs at zero, a central result of this paper.

\begin {thm} $(${\bf Non-holomorphicity Theorem}$)$
A \ppp family
$$\left\{\left.\frac {h_i}{L_{i1}^{s_{i1}}\cdots L_{in_i}^{s_{in_i}}}\right|\, 1\leq i\leq p\right\}$$
of $\coef$-\gname germs at zero is non-holomorphic in the sense that, if a linear combination
\begin{equation}
\sum_{i=1}^p a_i\frac {h_i}{L_{i1}^{s_{i1}}\cdots L_{in_i}^{s_{in_i}}},\quad a_i\in \coef , 1\leq i\leq p,
\mlabel{eq:germ}
\end{equation}
is holomorphic, then $a_i=0$ for $1\leq i\leq p$. In particular, this family of polar germs  is linearly independent and the holomorphic function for the linear combination is identically zero.
\mlabel {thm:non-holo}
\mlabel{thm:LinearlyIndepOfGerms}
\end{thm}

\begin {proof} It suffices to show the theorem for  $\F=\R$ since the result then follows for any subfield  $\F$ of $\R$.

By Lemma~\mref {lem:SepSpace}, we can assume that, for each $1\leq i\leq r$, the weight $\vert s_i\vert =s_{i1}+\cdots +s_{in_i}$ is the same and the linear forms in the denominators span the same space.
As in the proof of Lemma~\mref {lem:SepSpace}, we can pick values  $\ell _{n_1+1}^0, \cdots, \ell _k^0$ of $\ell _{n_1+1}, \cdots, \ell _k$ such that $h_i(\ell _{n_1+1}^0, \cdots, \ell _k^0)\not =0, 1\leq i \leq t$, and $$\sum_{i=1}^t a_i\frac
{h_i(\ell _{n_1+1}^0,\cdots,\ell _k^0)}{L_{i1}^{s_{i1}}\cdots L_{in_i}^{s_{in_i}}}=0.$$
 But the set of fractions $\frac {1}{L_{i1}^{s_{i1}}\cdots L_{in_i}^{s_{in_i}}}, 1\leq i\leq t,$ is \ppp. Thus by Proposition~\mref{lem:LinearlyIndep}, the coefficients $a_ih_i(\e_{n_1+1}^0,\cdots,\e_k^0)$ and hence the coefficients $a_i, 1\leq i\leq t,$ are zero which leads to  a contradiction.
\end{proof}

A direct consequence of Theorem~\mref {thm:non-holo} is the following uniqueness result.

\begin{coro}
Let $\{S_i\}_{1\leq i\leq \ell}$ and $\{T_j\}_{1\leq j\leq m}$ be \ppp families of $\coef$-\gname germs at zero sharing the same properly positioned family of supporting cones (upon a suitable choice of signs of the linear forms). If
\begin{equation}
\sum_{i=1}^\ell S_i+g_0=\sum_{j=1}^m T_j+h_0
\mlabel{eq:uniq}
\end{equation}
for holomorphic germs $g_0$ and $h_0$, then $\{S_i\}_{1\leq i\leq \ell}=\{T_j\}_{1\leq j\leq m}$ and $g_0=h_0$.
\mlabel {coro:uniqueness}
\end{coro}

\begin{proof} Let $\{C_1, \cdots , C_r\}$ be a properly positioned family of supporting cones of $\{S_i\}_{1\leq i\leq \ell}$ and of $\{T_j\}_{1\leq j\leq m}$.
For $1\leq i\leq r,$ let $L_{i1},\cdots, L_{in}$ be fixed generators of the $\F$-cones $C_i$.
Let $N$ be the largest sum of powers in the denominators of $\{S_i\}_{1\leq i\leq \ell}$ and $\{T_j\}_{1\leq j\leq m}$ and denote
$$\{M_1, \cdots, M_t\}=\Big\{L_{i1}^{s_1}\cdots L_{in}^{s_n} \ \Big|\ i=1, \cdots, r, \ \ |\vec s|:=\sum_j s_j \le N \Big\}.$$
Then we have
$$ \sum_{i=1}^\ell S_i = \sum_{k=1}^t \frac{g_k}{M_k}\quad \text{and} \quad \sum_{j=1}^m T_j =\sum_{k=1}^t \frac{h_k}{M_k},
$$
where for $1\leq k\leq t$, $g_k$ and $h_k$, some of which can be zero, are holomorphic in some linear forms orthogonal to the linear forms in $M_k$ with respect to the given inner product. Thus Eq.~(\mref{eq:uniq}) gives
$$ \sum_{k=1}^t \frac{g_k-h_k}{M_k}=h_0-g_0.$$
But the terms in the sum satisfy the conditions of Theorem~\mref{thm:non-holo}. Thus we have $g_k=h_k$ for $0\leq k\leq t$ which implies that the $S_i$'s  match with the $T_j$'s, giving the identification we want.
\end{proof}

\section{Laurent expansions of meromorphic germs at zero with linear poles}
\mlabel {sect:decom}

In this section, we apply the Non-holomorphicity Theorem~\mref{thm:non-holo} and Corollary~\mref{coro:uniqueness} to develop a notion of Laurent expansions for multivariate meromorphic germs at zero with linear poles.

Central to the notion of Laurent expansions is  the forgetful map in Definition~\mref{de:polarspace} which gives  formal expansions of meromorphic germs at zero. Taking local cross sections of this map, we first identify the Laurent subspaces in Proposition~\mref{pp:finj}, formally defined in Definition~\mref{defn:relLaurent}. We then show in Theorem~\mref{thm:DecomF} that these Laurent subspaces cover the whole space $\calm_{\F}(V^{\circledast}\ot \C)$ with the help of the surjectivity of $\varphi$ proved in  Theorem \ref{thm:surj}. We then establish a consistency property of these Laurent subspaces in Proposition~\mref{pp:SubDDecom}. Finally, we determine the kernel of the forgetful map in Section~\mref{sec:kernel}.

\subsection {The  space of formal expansions}
\mlabel{ss:lauexp}

We first generalize the concept of decorated smooth cones in~\cite{GPZ1}.

\begin {defn}
\mlabel {defn:DSimplicialCone}
A {\bf decorated simplicial $\F $-cone} is a formal monomial $\decc:=\cone{v_1}^{s_1}\cdots \cone{v_n}^{s_n}$ where $v_1, \cdots, v_n$ are linearly independent $\F $-vectors and $s_1, \cdots , s_n$ are in $\Z _{\ge 1}$.
The simplicial  $\F$-cone $\cone{v_1,\cdots,v_n}$ generated by $v_1, \cdots, v_n$ is called the {\bf geometric cone} of the decorated cone $\decc$, and is denoted by $G(\decc)$.
\end{defn}

As before, these generators define linear functions $L_1,\cdots, L_n$ on $V_k^*\ot \C$.
For a different choice of the spanning vectors $v_1,\cdots,v_n$, the function $\vec{L}_\decc:=L_1^{s_1}\cdots L_n^{s_n}$ alters by a constant. Thus for any subspace $U$ of $V\ot \C$, the subspace $\frac{1}{{\vec L}_\decc}\calm  _{\F}(U^*)$ does not depend on the choice of the spanning vectors $v_1,\cdots,v_n$. In particular this holds for $U={\rm lin}^\perp(G(\decc))$, the orthogonal complement of the linear span of $G(\decc)$ in $V\ot \C$ with respect to the given inner product $Q$. The space
$$\calm_\decc:=\frac{1}{\vec L_{\decc}}\calm  _{\F,+} \big(({\rm lin}^\perp(G(\decc)))^*\big)\subseteq \calm_\F(V^\circledast\ot \C)$$
is precisely the space spanned by polar germs whose support is $G(\decc)$ and with the fixed denominator $\vec{L}_\decc$.

\begin{defn} \begin{enumerate}
\item
Define the {\bf space of formal expansions in polar germs  } to be
$$\M_{\coef}(V^\circledast \otimes \C):=\Big(\bigoplus_{\decc}\calm_\decc\Big)\,\bigoplus\,{\calm} _{\F, +}(V^\circledast\otimes \C)
=\Big(\bigoplus_{\decc} \frac{1}{\vec L_{\decc}}\calm  _{\F,+} \big(({\rm lin}^\perp(G(\decc)))^*\big) \Big)\,\bigoplus\,{\calm} _{\F, +}(V^\circledast\otimes \C),$$
 where the sum is taken over decorated simplicial $\coef$-cones $\decc$.
\item
Define the {\bf forgetful map}
\begin{equation}
\varphi: \M_{\coef}(V^\circledast\otimes \C)  \longrightarrow {\mathcal M}_{\coef}(V^\circledast\otimes \C), \quad \bigoplus\limits_{\decc} S_\decc \oplus h \mapsto \sum_{\decc} S_\decc + h, S_\decc \in \calm_\decc, h\in {\calm} _{\F, +}(V^\circledast\otimes \C),
\mlabel{eq:varphi}
\end{equation}
sending direct sums in $\M _\F(V^\circledast\otimes \C)$ to sums of  functions in $\calm_\coef(V^\circledast \otimes \C)$
\end{enumerate}
\label{de:polarspace}
\end{defn}

Notice that different decorated simplicial $\F$-cones might give the same space $\calm_\decc$, for example when the generators of a cone change signs, giving multiple copies of identical summand in $\M_{\coef}(V^\circledast \otimes \C)$. For instance, $\calm_{\langle e_1\rangle}=\calm_{\langle - e_1\rangle}$ but they give distinct summands in $\M_{\coef}(V^\circledast \otimes \C)$.

By definition, the restriction of $\varphi$ to $\frac{1}{\vec L_{\decc}}\calm  _{\F,+} (({\rm lin}^\perp(G(\decc)))^*)$ for each decorated simplicial cones $\decc$, as well as to ${\calm} _{\F, +}(V^\circledast\otimes \C)$, is injective. The Non-holomorphicity Theorem~\mref{thm:non-holo} shows that this injectivity of $\varphi$ holds for much larger subspaces of $\M _\F(V^\circledast\otimes \C)$.

\begin{prop}
Let $\conefamilyc$ be a properly positioned family of cones in $V$. Denote
\begin{equation}
\M_{\conefamilyc,-}(V^\circledast \ot \C):= \bigoplus_{G(\decc)\in \conefamilyc} \calm_\decc = \bigoplus_{G(\decc)\in \conefamilyc} \frac{1}{\vec L_{\decc}}\calm  _{\F,+} (({\rm lin}^\perp(G(\decc)))^*).
\mlabel{eq:mc-}
\end{equation}
The restriction of $\varphi$ to
\begin{equation}\label{eq:Mconefamilyk0}
\M_{\conefamilyc}(V^\circledast \ot \C):= \M_{\conefamilyc,-}(V^\circledast \ot \C) \,\oplus\,{\calm} _{\F, +}(V^\circledast\otimes \C) \subseteq \M_\coef (V^\circledast \otimes \C)
\end{equation}
is injective.
\mlabel{pp:finj}
\end{prop}

\begin{proof}
This follows directly from Corollary~\mref{coro:uniqueness}.
\end{proof}

\begin{remark}
\begin{enumerate}
\item As a consequence of the proposition, we have
\begin{equation}
\calm_{\conefamilyc,-}(V^\circledast\ot \C):= \varphi(\M_{\conefamilyc,-}(V^\circledast\ot\C)) \cong \M_{\conefamilyc,-}(V^\circledast\ot \C).
\mlabel{eq:mc--}
\end{equation}
\item Note that $\calm_{\conefamilyc,-}(V^\circledast\ot \C)$ is the space spanned by polar germs   whose supporting cone is contained in $\conefamilyc$.
\end{enumerate}
\end{remark}

\begin{defn}\mlabel{defn:relLaurent} Let ${\conefamilyc}$ be a properly positioned family of simplicial cones. A meromorphic germ $f\in \calm_{\coef}(V_k^*\otimes \C)$ is said to {\bf admit a Laurent expansion supported on ${\conefamilyc}$} if it is contained in $\varphi(\M_{\conefamilyc}(V^\circledast \ot \C))$ or, more precisely, if there exists a \ppp family $\{S_j\}_{j\in J}$ of polar germs  whose supporting cones are contained in $\conefamilyc$, together with a holomorphic germ $h$, all with coefficients in  $\F$, such that
$$f= \varphi\Big(\bigoplus _{j\in J} S_j\oplus h\Big).
$$
The element $\oplus _{j\in J} S_j\oplus h\in\M_{\coef}(V^\circledast \otimes \C)$ with this property, unique by the injectivity in Proposition~\mref{pp:finj}, is called the ${\conefamilyc}$-supported {\bf Laurent expansion} of $f$, denoted by $\mathfrak L_{\conefamilyc}(f)$, that is,
\begin{equation}\mlabel{eq:Laurentf}
\mathfrak L_{\conefamilyc}(f):=\bigoplus _{j\in J} S_j\oplus h.
\end{equation}
The subspace $\varphi(\M_{\conefamilyc}(V^{\circledast}\ot \C))$ of $\calm(V^{\circledast}\ot \C)$ is called the {\bf Laurent subspace} supported by $\conefamilyc$.
\end{defn}

\begin{remark}
\begin{enumerate}
\item Clearly, for a polar germ $f=\frac {h(\ell_1, \cdots , \ell_m)}{L_1^{s_1}\cdots L_n^{s_n}}$ with supporting cone $C$ and a properly positioned family $\conefamilyc$ of simplicial cones containing $C$, we have
$\mathfrak L_{\conefamilyc}(f)=f$.
\item For any $f\in \calm_{\coef}(V_k^*\otimes \C)$ which admits a ${\conefamilyc}$-supported Laurent expansion, we have
$$\varphi\circ \mathfrak L_{\conefamilyc}(f)=f.$$
\end{enumerate}
\end{remark}

\begin{exam}
Take $\conefamilyc=\{\langle e_1\rangle\}$ in the standard Euclidean space. Then polar germs at the variables given by $z=\sum \e _ie_i^*$ and supported on $\conefamilyc$ are $h_i/\e _1^i, i\geq 0$, for holomorphic functions $h_i$ in variables other than $\e _1$. Thus the Laurent subspace supported by $\conefamilyc$, restricted to $V_1:=\R e_1$, is precisely
$$ \calm_\conefamilyc(V_1\ot \C):= \calm_\conefamilyc(V^\circledast\ot \C)\cap \calm(V_1\ot \C) =
 \bigoplus_{i\geq 1} \C\frac{1}{\e^i} \oplus \C\{\{\e\}\},$$
recovering the classical Laurent series expansions.
\mlabel{ex:one}
\end{exam}

\subsection{The subdivision operators} In order to show that every element in $\calm_{\coef}(V^\circledast \otimes \C)$ admits a Laurent expansion, we want to cover $\calm_{\coef}(V^\circledast \otimes \C)$ with Laurent subspaces. This is achieved by the subdivision operators which will also take care of the consistency on overlaps of Laurent subspaces.

The following definition generalizes the concept of a subdivision of a cone.

\begin{defn}\label{defn:properpos} A {\bf subdivision of a family of cones $\{C_i\}$} is a set $\{D_1,\cdots,D_r\}$ of cones such that
\begin{enumerate}
\item
$D_1,\cdots,D_r$ intersect along their faces,
\item
for any $i$, there is $I_i\subset [r]$ such that $\{D_\ell\}_{\ell\in I_i}$ is a subdivision of $C_i$, and
\item
$\cup_{i}I_i=[r]$.
\end{enumerate}
\end{defn}

We introduce a notion which will be convenient for later discussions.

\begin{defn}
Fix an ordered basis $\{e_i\}$ of a filtered space $V=\cup_{k\geq 1} V_k$ such that $\{e_i\}\cap V_k$ is a basis of $V_k$. A nonzero vector $v=\sum_i c_ie_i$ is called {\bf pseudo-positive} if the leading coefficient of $v$, namely the nonzero coefficient of $v$ with the largest subscript $i$, is positive. By convention, $0$ is taken to be a pseudo-positive vector. Let $\mathbf{P}$ denote the set of pseudo-positive vectors.
\mlabel{de:pp}
\end{defn}

As can be easily checked, the set $\mathbf{P}$ is the union of the  increasing filtration consisting of the strictly convex sets $\mathbf{P}_n\subseteq \R \{e_1,\cdots,e_n\}, n\geq 0, $ where, by convention, $\mathbf{P}_0:=\{0\}$ and recursively,
$$ \mathbf{P}_{n+1}:=\mathbf{P}_n\cup (\R\{e_1,\cdots,e_n\}\times \R_{>0} e_{n+1}), \quad n\geq 0.$$
Consequently, $\mathbf{P}$ is a strictly convex set.

\begin{lemma}
Any finite family of cones whose union does not contain a nonzero linear subspace has a properly positioned family of cones as a subdivision. The union of the family of the cones does not change with the subdivision. In particular, if a finite family of cones is in $\mathbf{P}$, then so is any of its properly positioned families of subdivisions.
\mlabel{lem:ppsubd}
\end{lemma}

\begin{proof}
The existence of a subdivision follows the proof of Lemma 2.3(a) in \cite {GPZ1}, noting that the assumption made there, namely that the cones span the same linear subspace, is redundant. Then the assumption that the union of the family does not contain a nonzero linear subspace guarantees that the resulting family is properly positioned. The second other statement also follows from the proof of~\cite[Lemma~2.3(a)]{GPZ1}.
\end{proof}

\begin{lemma}
Given any finite family of polar germs, there is a choice of the family of supporting cones whose union does not contain a nonzero linear subspace.
\mlabel{lem:supconv}
\end{lemma}

\begin{proof}
Fix an ordered basis of $V$. By rescaling if necessary, we can assume that all linear forms in the denominators of the polar germs  are pseudo-positive. The supporting cones of the polar germs   spanned by vectors corresponding to these linear forms are therefore contained in the strictly convex set $\mathbf{P}$ and hence does not contain any non-zero linear subspace.
\end{proof}

\begin{defn} A {\bf pan-subdivision of a family of cones $\conefamilyc=\{C_i\}$} is a set $\conefamilyd=\{D_1,\cdots,D_r\}$ of cones that satisfies conditions~(a) and~(b) in Definition~\ref{defn:properpos}, namely
\begin{enumerate}
\item
$D_1,\cdots,D_r$ intersect along their faces,
\item
for any $i$, there is $I_i\subset [r]$ such that $\{D_\ell\}_{\ell\in I_i}$ is a subdivision of $C_i$.
\end{enumerate}
If all the cones are $\F$-cones, then the pan-subdivision is called a $\F$-pan-subdivision.
\end{defn}
\begin{ex} A subdivision for a family $\conefamilyc$ of cones is a pan-subdivision for a sub-family of $\conefamilyc$.
\end{ex}

Let $\conefamilyc$ be a properly positioned family of cones and $\conefamilyd$ a simplicial pan-subdivision of $\conefamilyc$. We next define a subdivision operator
$$\frakS_{(\conefamilyc,\conefamilyd)}:\M_\conefamilyc (V^\circledast \ot \C)\to \M_\conefamilyd (V^\circledast \ot \C).$$
Since $\M_\conefamilyc (V^\circledast \ot \C):=\oplus_{G(\decc)\in \conefamilyc} \calm_\decc$, we only need to define its action on $\calm_{\decc}$ for a decorated cone $\decc:=\cone{v_1}^{s_1}\cdots \cone{v_n}^{s_n}$ as in Definition~\mref{defn:DSimplicialCone}, with $G(\decc)$ in $\conefamilyc$.

We first consider the action when $s_1=\cdots=s_n=1$. Then a polar germ in $\calm_{\decc}$ is of the form $\frac{g}{L_1\cdots L_n}$ for a simple fraction $\frac{1}{L_1\cdots L_n}$ and a holomorphic germ $g$ in a set of variables orthogonal to the linear span of $L_1,\cdots, L_n$. Let $G(\decc)=C$. There is a unique subset $\{D_\mu\}_{\mu \in J}$ of $\conefamilyd $ that gives a subdivision of $C$. As in Eq.~(\mref{eq:phi0}), we have
$$ I(C)=(-1)^n\frac{a}{L_{1}\cdots L_{n}}, \quad
I(D_\mu)=(-1)^n\frac{b_\mu}{M_{\mu 1}\cdots M_{\mu n}},$$
where $a, b_\mu$ are constants in $\coef$. By Eq.~(\mref{eq:phi1}),
\begin{equation}\label{eq:LM}
\frac{1}{L_{1}\cdots L_{n}}=\frac{(-1)^n}{a}I(C)=\frac{(-1)^n}{a}\sum_{\mu\in J} I(D_\mu)=\sum_{\mu\in J} \frac{b_\mu}{a}\frac{1}{M_{\mu 1}\cdots M_{\mu n}}.
\end{equation}
Note that $I(D_\mu)$ is supported on $D_\mu$, and by Lemma~\mref{lem:gencone2}, such a decomposition with support on $\conefamilyd$ is unique. Thus we can define
\begin{equation}
\frakS_{(\conefamilyc,\conefamilyd)}\Big(\frac{g}{L_1\cdots L_n}\Big):= \bigoplus_{\mu\in J} \frac{b_\mu}{a}\frac{g}{M_{\mu 1}\cdots M_{\mu n}} \in \bigoplus_{\mu\in J} \calm_{D_\mu}\subseteq \M_\conefamilyd (V^\circledast\ot \C).
\mlabel{eq:ssimple}
\end{equation}

We next introduce a class of differential operators in order to treat general decorated cones. Let $\{e_1, e_2, \cdots \}$ be a basis of the filtered space $V$ and let $\{e^*_1, e^*_2, \cdots \}$ be the dual basis. Let $\vec \e=\sum \e _i e_i^*$ be a generic vector in $ V^\circledast\otimes \C$. Then respect to the variables $\e_i$, we have differential operators
$$\partial_i:=-\frac{\partial}{\partial \e _i}: \calm _\coef (V^\circledast\otimes \C) \to \calm_\coef  (V^\circledast\otimes \C).$$
For a fixed vector $v^*=\sum_i c_ie_i^*\in V^\circledast$, denote $\partial _{v^*}:=\sum_i c_i\partial _ i,$ the negative of the directional derivation. Then for any function $f$ in linear independent linear forms $K_1,\cdots, K_m$, the chain rule gives
\begin{equation}
\partial _{v^*}f(K_1,\cdots, K_m)=-\sum_{i=1}^m (v^*, K_m)\frac{\partial f}{\partial K_m}.
\mlabel{eq:diffdefg}
\end{equation}

Now for any given decorated cone $\decc$ with $G(\decc)\in \conefamilyc$ and polar germ $\frac{g}{L_1^{s_1}\cdots L_n^{s_n}}$ in $\calm_\decc$,
let $\{L_i^*=\sum_j
c_{ij} e_j^*\}_i$ be dual to the linear forms $\{L_i\}_i$ in the sense that $(L_i, L_j^*)=\delta
_{ij}, 1\leq i, j\leq n$.
By Eq.~(\mref{eq:diffdefg}) we obtain
$$
\partial_{L_i^*}\frac{1}{M_{1}^{r_1}\cdots M_{n}^{r_n}}
 = \sum_{j=1}^n \frac{c_{ij} }{M_1^{r_1}\cdots M_j^{r_j+1}\cdots M_n^{r_n}},$$
for some constants $c_{i1} ,\cdots, c_{in} $  depending on the poles $M_1, \cdots ,M_n$ and on $L_i$.
Since $g$ is in a set of variables orthogonal to $M_{1},\cdots,M_{n}$,  we further obtain
\begin{equation}
\partial_{L_i^*}\frac{g}{M_{1}^{r_1}\cdots M_{n}^{r_n}}
 = \sum_{j=1}^n \frac{c_{ij} \, g\,}{M_1^{r_1}\cdots M_j^{r_j+1}\cdots M_n^{r_n}} = g \,\partial_{L_i^*}\frac{1}{M_{1}^{r_1}\cdots M_{n}^{r_n}}.
\mlabel{eq:s2}
\notag
\end{equation}
We then define
\begin{equation}
\delta_{L_i^*}: \calm_{\underline{D}} \to \bigoplus_{G(\underline{E})\in \conefamilyd} \calm_{\underline{E}} \subseteq \M_{\conefamilyd}(V^\circledast\ot \C), \quad
 \frac{g}{M_{1}^{r_1}\cdots M_{n}^{r_n}}\mapsto \bigoplus_{j=1}^n \frac{c_{ij}\,g}{M_1^{r_1}\cdots M_j^{r_j+1}\cdots M_n^{r_n}},
\mlabel{eq:s3}
\end{equation}
which, by acting componentwise in $\M_\conefamilyd(V^\circledast \ot \C):=\oplus_{G(\underline{D})\in \conefamilyd} \calm_{\underline{D}}$, gives rise to an operator
\begin{equation}
\delta_{L_i^*}: \M_\conefamilyd(V^\circledast \ot \C) \to \M_\conefamilyd(V^\circledast \ot \C).
\mlabel{eq:delta}
\end{equation}

By \cite[Proposition~4.8 (b)]{GPZ1},
we have
\begin{equation}
\frac{1}{L_1^{s_1}\cdots L_n^{s_n}}
= \frac
1{(s_1-1)!\cdots (s_n -1)!}\partial_{L_1^*}^{s_1-1}\cdots
\partial_{L_n^*}^{s_n-1}\frac{1}{L_1\cdots L_n}.
\mlabel{eq:diffrac}
\end{equation}
We accordingly apply Eqs.~(\mref{eq:ssimple}) and (\mref{eq:delta})   to define
\begin{equation}
\frakS_{(\conefamilyc,\conefamilyd)}\Big(\frac{g}{L_1^{s_1}\cdots L_n^{s_n}}\Big):=
\frac
1{(s_1-1)!\cdots (s_n -1)!}\delta_{L_1^*}^{s_1-1}\cdots
\delta_{L_n^*}^{s_n-1}\Big (\frakS_{(\conefamilyc,\conefamilyd)}\Big(\frac{g}{L_1\cdots L_n}\Big )\Big),
\mlabel{eq:Sdiffrac}
\end{equation}
completing the definition of the {\bf subdivision operator}
\begin{equation}
\frakS_{(\conefamilyc,\conefamilyd)}: \M_{\conefamilyc}(V^\circledast\ot \C) \to \M_{\conefamilyd}(V^\circledast\ot \C).
\mlabel{eq:famsubdiv}
\end{equation}
Notice that by Eq.~ (\mref {eq:s2}),
\begin{equation}
\frakS_{(\conefamilyc,\conefamilyd)} \left ( \frac{g}{L_1^{s_1}\cdots L_n^{s_n}}\right)=
g\frakS_{(\conefamilyc,\conefamilyd)} \left ( \frac{1}{L_1^{s_1}\cdots L_n^{s_n}}\right).
\mlabel{eq:subop}
\end{equation}

In the definition of the subdivision operator, we choose a basis of $V$ and a dual of the linear forms in the polar germs. The following proposition shows that  this operator does not actually depend on such choices.
\begin {prop} \label{pp:subdoop}
\begin{enumerate}
\item
The subdivision operator $\frakS_{(\conefamilyc,\conefamilyd)}$ is compatible with the forgetful map $\varphi$, i.e.,
${\varphi} \circ \mathfrak{S}_{(\conefamilyc, \conefamilyd)} =\varphi.$
\label{it:compatiblesubd}
\item
The subdivision operator $\mathfrak{S}_{(\conefamilyc, \conefamilyd)}$ does not depends on the choice of the basis of $V$.
\label{it:subchoice}
\end{enumerate}
\end{prop}
\begin{proof}
(\mref{it:compatiblesubd}).
By Eqs.~(\ref{eq:LM}) and (\mref{eq:ssimple}), the desired equation holds for polar germs with $s_1=\cdots=s_n=1$. Since $\varphi\circ \delta_{L_j^*} = \partial_{L_j^*} \circ \varphi$ by construction, the desired equation follows from Eqs.~(\ref{eq:diffrac}) and (\ref{eq:Sdiffrac}).
\smallskip

\noindent
(\ref{it:subchoice}). For a polar germ $f$ supported on $\conefamilyc$,  and  for any choice of the basis of $V$, $\mathfrak{S}_{(\conefamilyc, \conefamilyd)}(f)$ is a sum of polar germs supported on $\conefamilyd$, which equals to $f$ as a function by Item~(\mref{it:compatiblesubd}),   so that   by Corollary \mref {coro:uniqueness}, $\mathfrak{S}_{(\conefamilyc, \conefamilyd)}(f)$ is unique.
\end{proof}

Furthermore, for a simplicial  $\F$-pan-subdivision $\conefamilye$ of $\conefamilyd$, by the transitivity of pan-subdivisions, we obtain
\begin{equation}
\mlabel {eq:SubdOp}
\mathfrak{S}_{(\conefamilyd, \conefamilye)}\circ \mathfrak{S}_{(\conefamilyc, \conefamilyd)}=\mathfrak{S}_{(\conefamilyc, \conefamilye)}.
\end{equation}

Now we show that the Laurent subspaces cover the whole space $\calm_{\coef}(V_k^*\otimes \C)$, proving the existence of an Laurent expansion for any meromorphic germ.

\begin {thm}
\mlabel {thm:DecomF} Let $f$ be an element in $\calm_{\coef}(V_k^*\otimes \C)$.
There exists a properly positioned family of simplicial cones $\conefamilyc$ such that $f$ has a Laurent expansion supported on $\conefamilyc$. In other words, there is a \ppp family of polar germs   $\{S_j\}_{j\in J}$ supported on $\conefamilyc$, together with a holomorphic germ $h$, all with coefficients in  $\F$, such that
\begin{equation}\label{eq:decgermf}f= \varphi\left(\bigoplus _{j\in J} S_j\oplus h\right),
\end{equation}
or as function decomposition,
\begin{equation}\label{eq:decgerm}f= \sum _{j\in J} S_j+ h.
\end{equation}
In fact, the family $\conefamilyc$ can be taken to be in $\mathbf{P}$.
\end{thm}

\begin{proof}
Take any decomposition of $f$ as in Theorem~\mref{thm:surj},
$f=\sum _{ i\in I} g_{i} + h,
$
where $\{g_i\}$ is a finite set of polar germs  and $h$ is holomorphic at zero. By Lemma~\mref{lem:supconv}, there is a choice $\conefamilyc$ of the family of the supporting cones of the polar germs such that the union of the cones does not contain any nonzero linear subspace. In fact, the proof of Lemma~\mref{lem:supconv} shows that $\conefamilyc$ can be chosen to be contained in $\mathbf{P}$. By combining colinear terms, we can assume that the decorated cones of these polar germs are distinct. Hence we can write
$f=\varphi(\oplus _{ i\in I} g_{i} \oplus h)$.

By Lemma~\mref{lem:ppsubd}, the family $\conefamilyc$ has a pan-subdivision $\conefamilyd$ that is properly positioned. Then through the subdivision operator $\mathfrak{S}(\conefamilyc ,\conefamilyd)$, the sum
$\bigoplus _{ i\in I} \mathfrak{S}(\conefamilyc ,\conefamilyd) (g_{i}) \oplus h
$
is a desired Laurent expansion of $f$ supported on $\conefamilyd$.
\end{proof}

\begin{exam} In the standard Euclidean space,
we have
$$
\frac {z_1+2z_2}{z_1(z_1+z_2)z_2}=\frac 1{z_1z_2}+\frac 1{z_1(z_1+z_2)} =2\frac {1}{z_1(z_1+z_2)}+\frac 1{(z_1+z_2)z_2}.
$$
Here the first equation expressed the meromorphic germ as a sum of polar germs as in Theorem~\mref{thm:surj}. The second equation rewrite the sum of polar germs as a sum of \ppp family of polar germs, as in Theorem~\mref{thm:DecomF}.
\mlabel{ex:lauexp}
\end{exam}

We finally prove the coherence of Laurent expansions arising from different properly positioned family of cones, namely their compatibility with the subdivision operators.

\begin{prop}
\begin{enumerate}
\item
Assume that $f \in {\mathcal M} _\F(V^\circledast\otimes \C)$ admits a ${\conefamilyc}$-supported Laurent expansion and let ${\conefamilyd}$ be a simplicial  $\F$-pan-subdivision of ${\conefamilyc}$. Then $\mathfrak{S}_{(\conefamilyc , \conefamilyd )}\mathfrak L_{\conefamilyc}(f)$ is the ${\conefamilyd}$-supported Laurent expansion of $f$.
\mlabel{it:trans}
\item
With respect to the inclusion operators, the set of Laurent subspaces supported on cones in the set $\mathbf{P}$ forms a direct system. Its direct limit is $\calm_{\F}(V^\circledast\ot \C)$.
\mlabel{it:direct}
\end{enumerate}
\mlabel{pp:SubDDecom}
\end{prop}

\begin{proof}
(\mref{it:trans}) follows in a straightforward manner from Proposition~\ref{pp:subdoop}.(\ref{it:compatiblesubd}).
\smallskip

\noindent
(\mref{it:direct}) For two properly positioned families of simplicial cones in $\mathbf{P}$, their union is contained in $\mathbf{P}$. Thus by Lemma~\mref{lem:ppsubd}, their union has a properly positioned subdivision of simplicial cones, giving a common pan-subdivision of the two families. Thus the set of properly positioned families of simplicial cones in $\mathbf{P}$ is direct with respect to pan-subdivisions.
Through the subdivision operators, the set $$\{\M_{\conefamilyc}(V^\circledast \ot \C)\,|\, \text{properly positioned families }\conefamilyc \text{ in } \mathbf{P}\}$$
is a direct system.
Then by Proposition~\mref{pp:subdoop}.(\mref{it:compatiblesubd}) the set of Laurent subspaces $$\{\calm_{\conefamilyc}(V^\circledast \ot \C)\,|\, \text{properly positioned families }\conefamilyc \text{ in } \mathbf{P}\}$$
is a direct system with respect to the inclusion maps. Its direct limit is $\calm_{\F}(V^\circledast\ot \C)$ since the union of Laurent subspaces supported in $\mathbf{P}$ is $\calm_{\F}(V^\circledast\ot \C)$ by Theorem~\mref{thm:DecomF}.
\end{proof}

As an immediate consequence, we obtain the following Rota-Baxter type decomposition utilized in~\mcite{GPZ2}.

\begin{cor}\mlabel{cor:DecomF} Let $(V, \Lambda_V)$ be a filtered $\F$-Euclidean lattice space.
There is a direct sum decomposition
$$\calm_\coef (V^\circledast\otimes \C)=\calm  _{\F,-}(V^\circledast\otimes \C)\oplus \calm  _{\F,+} (V^\circledast\otimes \C). $$
In particular, the holomorphic part $h$ and the polar part $\sum_j S_j$ in  Eq.~(\ref{eq:existencedec}) are  uniquely determined by the germ $f$.
\mlabel{it:merodec2}
\end{cor}
\begin{proof}
For each properly positioned family $\conefamilyc$ of simplicial cones, Proposition~\mref{pp:finj} gives the direct sum decomposition
$$ \calm_{\F,\conefamilyc}(V^\circledast \otimes \C)= \calm_{\F,\conefamilyc,-}(V^\circledast\otimes \C) \oplus \calm_{\F,+}(V^\circledast\otimes \C).$$
By Proposition~\mref{pp:SubDDecom}, we obtain
$$\calm_{\F}(V^\circledast\ot \C) = \dirlim \calm_{\F,\conefamilyc}(V^\circledast\ot \C) =
\dirlim\calm_{\F,\conefamilyc,-}(V^\circledast\ot \C) \oplus \calm_{\F,+}(V^\circledast\ot \C)
=\calm_{\F,-}(V^\circledast\ot \C) \oplus \calm_{\F,+}(V^\circledast\ot \C),$$
where the direct limits are taken over those $\conefamilyc$ in $\mathbf{P}$.
\end{proof}

We introduce a notation before stating the next result.
\begin{defn}
Germs $f, g\in \calm_\coef(V^\circledast\ot \C)$ are said to be {\bf orthogonally variate germs} if there are germs $\tilde f $ on $\C ^n$ and $\tilde g$ on $\C ^m$ such that $ f=\tilde f(L_1,\cdots,L_m)$ and $g=\tilde g(M_1,\cdots,M_n)$ for linear independent linear forms $\{L_1,\cdots,L_m\}$ and $\{M_1,\cdots,M_n\}$ on $V^\circledast\ot \C$ with $Q(L_i,  M_j)=0$ for $(i,j)\in [1,m]\times [1,n]$.
\mlabel{de:orth}
\end{defn}

\begin{cor} \mlabel{cor:proj}
{\bf (Multiplicativity of $\pi_+$ on orthogonally variate germs)}
Let
\begin{equation}\pi_+ : \calm  _\F (V^\circledast\otimes \C)\to \calm  _{\F,+}(V^\circledast\otimes \C)
\mlabel{eq:merodec}
\end{equation}
denote the projection map onto $\calm  _{\F,+}(V^\circledast\otimes \C)$ along $\calm  _{\F,-}(V^\circledast\otimes \C)$. For orthogonally variate germs $f$ and $g$, we have
\begin{equation}
\pi_+(f\, g)= \pi_+(f)\, \pi_+(g).
\mlabel{eq:pimult}
\end{equation}
\end{cor}
\begin{proof}
Let $f$ and $g$ be in $\calm  _{\F,+}(V^\circledast\otimes \C)$. Using  Eq.~(\ref{eq:existencedec}), we decompose
	$f=h+ \sum_{i=1}^mS_i$  and 	$g=k+\sum_{j=1}^nT_j$ with   $h, k$   holomorphic germs and $  S_i$, $T_j$ polar germs. Further by Theorem~\mref{thm:surj}, with the notations in Definition~\mref{de:orth}, $h$ and $S_i$ (resp. $g$ and $T_j$) can be written as functions in linear forms in $\text{span}(L_1,\cdots,L_m)$ (resp. $\text{span}(M_1,\cdots,M_n)$). Now
$$fg=hk+h\Big(\sum_{j=1}^nT_j\Big)+k\Big(\sum_{i=1}^mS_i\Big)+\sum_{i,j}S_i T_j.
$$
By the orthogonality of $\text{span}(L_1,\cdots,L_m)$ and $\text{span}(M_1,\cdots,M_n)$, the germs $hT_j$, $kS_i$ and $S_iT_j$ are all polar germs.
Thus this is a decomposition of $fg$ into the sum of a holomorphic germ $hk$ and a linear combination of polar germs. Thus by Corollary~\mref{it:merodec2}, $\pi_+(fg)=hk=\pi_+(f)\pi_+(g)$.
\end{proof}

\begin{remark}
The projection $\pi_+$ is a multivariate generalization of the minimal subtraction operator in one variable. The multiplicativity on orthogonally variate germs stated in Corollary~\mref{cor:proj} is closely related to locality in quantum field theory and central in renormalization issues.
\end{remark}
\subsection{The kernel of the forgetful map}
\mlabel{sec:kernel}
We finally determine the kernel of the forgetful map
$\varphi: \M_{\coef}(V^\circledast\otimes \C)  \longrightarrow {\mathcal M}_{\coef}(V^\circledast\otimes \C)$
introduced in Eq.~(\mref{eq:varphi}).

\begin{theorem} \label{thm:kernphi} The kernel of the map $\varphi $ is the subspace of $ \M_{\coef}(V^\circledast\otimes \C)$ spanned by elements of the following forms	
\begin {itemize}
\item [$\mathrm{I}$.] $\frac {h(\ell_1, \cdots , \ell_m)}{L_1^{s_1}\cdots L_n^{s_n}} \oplus\left((-1)^{s_1+1} \frac {h(\ell_1, \cdots , \ell_m)}{(-L_1)^{s_1}\cdots L_n^{s_n}}\right)$,	 for all polar germs  of the form $\frac {h(\ell_1, \cdots , \ell_m)}{L_1^{s_1}\cdots L_n^{s_n}}$;
\item [$\mathrm{II}$.] $\frac {h(\ell_1, \cdots , \ell_m)}{L_1^{s_1}\cdots L_n^{s_n}}\oplus \mathfrak{S}_{(\conefamilyc , \conefamilyd )}\left(-\frac {h(\ell_1, \cdots , \ell_m)}{L_1^{s_1}\cdots L_n^{s_n}}\right)$, for all polar germs of the form $\frac {h(\ell_1, \cdots , \ell_m)}{L_1^{s_1}\cdots L_n^{s_n}}$, $\conefamilyc:=\{\langle L_1,\cdots, L_n\rangle\}$ and $\conefamilyd$ a simplicial subdivision of $\langle L_1,\cdots, L_n\rangle$.
\end{itemize}
\end{theorem}

Thus modulo changing of signs,  relations among polar germs   amount to subdivision relations.

\begin {proof} Clearly, the subspace  $W$  of $\M_{\coef}(V^\circledast\otimes \C)$   generated by elements of the forms I and II is a subspace of $ \ker \varphi$. So we only need to prove that if $G\oplus H$  is in $\ker \varphi$ with $G=\oplus_j S_j$ a sum of polar germs   $S_j$ and $H$ a holomorphic germ at zero as in Theorem~\mref{thm:surj}, then $G$ lies in $ W$ and $H$ vanishes.

By Lemma~\mref{lem:supconv}, modulo elements of form I, we can assume that the union of the supporting cones of $S_j$ does not contain any non-zero subspace. Let $\conefamilyc:=\{C_j\,|\, j\in J\}$ be the family of supporting cones, and let $\conefamilyd$ be a simplicial subdivision of $\conefamilyc$. Then $G+ \mathfrak{S}_{(\conefamilyc , \conefamilyd )}(-G)=\sum_j \left(S_j+ \mathfrak{S}_{(\conefamilyc _j , \conefamilyd _j )}(-S_j)\right) $ -- where $\conefamilyc _j$ is the singleton $\{ C_j\}$  and $ \conefamilyd _j$ is the subdivision of $C_j$ induced by $\conefamilyd$ -- is a sum of elements of type II  and hence lies in $ W$. Since $\mathfrak{S}_{(\conefamilyc , \conefamilyd )}(-G)-H=-G-H\in \ker \varphi$, we have
$$\varphi(\mathfrak{S}_{(\conefamilyc , \conefamilyd )}(-G))-\varphi(H)=0.$$
Theorem \mref {thm:non-holo} and Proposition~\mref{pp:finj} then yield $\mathfrak{S}_{(\conefamilyc , \conefamilyd )}(-G)=0$ and $H=0$. Therefore,
$$ G+H=G+\frakS_{\conefamilyc,\conefamilyd}(-G) - \frakS_{\conefamilyc,\conefamilyd}(-G) +H$$
is in $W$.
\end{proof}

\section{Refined gradings and applications}
Laurent expansions have many  useful applications, such as   providing much finer decompositions of $\calm  _\F (V^\circledast\otimes \C)$ than the one in Corollary~\mref {cor:DecomF}. As applications, we obtain the Brion-Vergne decomposition and the Jeffery-Kirwan residue of a class of meromorphic germs.

\subsection{Decompositions of meromorphic germs at zero}
Let $(V,\Lambda_V)$ be a filtered lattice space.
\begin{defn}\begin{enumerate}
\item For a polar germ $\frac {h(\ell_1, \cdots , \ell_m)}{L_1^{s_1}\cdots L_n^{s_n}}$, we call $s_1+\cdots +s_n$ the {\bf p-order} of the polar germ.
\item We call the {\bf supporting subspace} of a polar germ   the subspace spanned by the supporting cone of the polar germ.
\item For $p\in \Z_{\geq 0}$, let  $\calm_{\coef}^p(V^\circledast \otimes \C)$ denote the linear span of $\coef$-polar germs with p-order $p$.
\item For any $\coef $-subspace $U\subset V$ , let  $\calm_{\coef,U}(V^\circledast \otimes \C)$ denote the linear span of $\coef$-polar germs with supporting subspace $U$.
\item For $d\in \Z_{\geq 0}$, let  $\calm_{\coef,d}(V^\circledast \otimes \C)$ denote the linear span of $\coef$-polar germs whose supporting subspaces have dimension $d$.
\item For any $\coef $-subspace $U\subset V$ and $p\in \Z_{\ge 0}$, let  $\calm_{\coef,U}^p(V^\circledast \otimes \C)$ denote the linear span of $\coef$-polar germs with supporting subspace $U$ and p-order $p$.
\end{enumerate}
\mlabel{de:supp}
\end{defn}

\begin{remark} With these notations, we have $\calm_{\coef,\{0\}}(V^\circledast \otimes \C)= \calm_{\coef,0}(V^\circledast  \otimes \C)= \calm_{\coef,+}(V^\circledast \otimes \C)$ for the trivial cone $\{0\}$ and integer $d=0$.
\end{remark}

\begin{theorem}\label{thm:SpaceDec} We have the decompositions
\begin{eqnarray}
{\calm} _{\F }(V^\circledast\otimes \C)&=&\bigoplus_{p\geq 0 } {\calm} _{\F}^p(V^\circledast\otimes \C),\mlabel{eq:pgrad}\\
{\calm} _{\F }(V^\circledast\otimes \C)&=&\bigoplus_{U \subset V } {\calm} _{\F, U}(V^\circledast\otimes \C),\mlabel{eq:ugrad}\\
{\calm} _{\F }(V^\circledast\otimes \C)&=&\bigoplus_{d\geq 0} {\calm} _{\F, d}(V^\circledast\otimes \C),\mlabel{eq:dgrad}\\
{\calm} _{\F }(V^\circledast\otimes \C)&=&\bigoplus_{U \subset V, p\in \Z_{\geq 0}} {\calm} _{\F, U}^p(V^\circledast\otimes \C).
\end{eqnarray}

\end{theorem}
Eq.~(\mref{eq:ugrad}) yields the decomposition in~\cite[Theorem~7.3]{BV2} corresponding to a sum running over the set of subspaces spanned by elements of the hyperplane of arrangements corresponding to the poles.

\begin{proof}
By Theorem~\mref{thm:kernphi}, the kernel of the surjective linear map $\varphi: \M_{\coef}(V^\circledast\otimes \C)  \longrightarrow {\mathcal M}_{\coef}(V^\circledast\otimes \C)$ is linearly spanned by elements each of which is a linear combination of polar germs with the same p-order, the same supporting subspace, the same dimension of the supporting subspace. Then the equations follow.
\end{proof}

On the grounds of Theorem~\ref {thm:SpaceDec}, we can give the following definitions.

\begin {defn}
\mlabel {de:ProjU}
Let $U$ be an $\coef$-subspace of $(V,\Lambda_V)$ and $p\in \Z_{\geq 0}$. Define
$$ P_{U}^p: \calm  _\F (V^\circledast\otimes \C)\to \calm  _{\F, U}^p (V^\circledast\otimes \C)\subset\calm  _\F (V^\circledast\otimes \C) $$
and $$ P_{U}: \calm  _\F (V^\circledast\otimes \C)\to \calm  _{\F, U} (V^\circledast\otimes \C)\subset\calm  _\F (V^\circledast\otimes \C) $$
to be the projections,
called the {\bf projection of $f$  onto the space $U$ of p-order $p$} and {\bf projection of $f$  onto the space $U$} respectively.
\end{defn}

For $d\in \Z_{\geq 0}$, setting
$${\calm} _{\coef ,\leq d }(V^\circledast\otimes \C):= \bigoplus_{0\leq k\leq d}  {\calm} _{\coef, k}(V^\circledast\otimes \C),\quad {\calm} _{\coef, > d }(V^\circledast\otimes \C):= \bigoplus_{k> d}  {\calm} _{\coef, k}(V^\circledast\otimes \C),$$
then we have
\begin {equation}
\mlabel {eqn:DecDim}
{\calm} _{\coef  }(V^\circledast\otimes \C)= {\calm} _{\coef,\leq d }(V^\circledast\otimes \C)\oplus {\calm} _{\coef,> d}(V^\circledast\otimes \C).
\end{equation}
This yields back the decomposition of Corollary \mref{cor:DecomF} if we take $d=0$.

The decomposition in Eq.~(\mref {eqn:DecDim}) also yields back Brion-Vergne's decomposition~\cite[Theorem 1]{BrV} as follows.
Let $\mathbf{\Delta} $   be a finite subset of lattice vectors in some $V$ with coefficients in $\F$. Let
\begin{equation}
U:=\mathrm{span} (\mathbf{\Delta}), \quad r:=\dim (U).
\mlabel{eq:ur}
\end{equation}
The symmetric algebra $S(U)$  {(over $\C$)} can be viewed as the algebra of polynomial functions on $U^*$. Following the notation of \cite{BrV}, let us denote by
$$R_{\mathbf{\Delta} } :=\mathbf{\Delta}^{-1}S(U)
$$
the localization of $S(U)$ with respect to $\mathbf{\Delta}$ which is naturally regarded as a subset of $S(U)$. It corresponds to the algebra of rational functions with linear poles in $\mathbf{\Delta}$. A subset $\kappa \subset \mathbf{\Delta}$ is called {\bf generating} if the linear span of $\kappa $ is $U$, and it is  called {\bf a basis} if it is a basis of $U$. Consider the following subspaces of $R_{\mathbf{\Delta}}$:
\begin{align*}
S_{ \mathbf{\Delta}}:=& \mathrm{span}\Big \{  \frac 1{\Pi _{\alpha \in \kappa}\alpha }\,\Big |\, \kappa \subseteq \mathbf{\Delta} \text{ bases of } U \Big\},\\
G_{ \mathbf{\Delta}}:=& \mathrm{span} \Big \{  \frac 1{\Pi _{\alpha \in \kappa}\alpha ^{n_\alpha} }\,\Big | \,\kappa\subseteq  \mathbf{\Delta} \text{ generating subsets of } U, n_\alpha\in \ZZ_{>0} \Big\},\\
NG_{ \mathbf{\Delta}}:=& \mathrm{span} \Big \{ \frac h{\Pi _{\alpha \in \kappa}\alpha ^{n_\alpha}}\,\Big |\, \kappa \subseteq \mathbf{\Delta} \text{ non-generating subsets of } U, n_\alpha\in \ZZ_{\geq 0}, h \in S(U) \Big\}.
\end{align*}

Clearly,
$$ {\calm} _{\coef, >r-1  } (V^\circledast \ot \C)\cap R_{ \mathbf{\Delta}} =G_{ \mathbf{\Delta}};\quad {\calm}_{\coef, \leq r-1}(V^\circledast \ot \C)\cap R_{ \mathbf{\Delta}} =NG_{ \mathbf{\Delta}}. $$
Thus Eq.~(\mref{eqn:DecDim}) recovers the following decomposition of $R_{\mathbf{\Delta}}$ obtained by Brion-Vergne.

\begin{cor} \cite[Theorem 1]{BrV} \mlabel{thm:BrionVergne} There is a direct sum decomposition
$$R_{ \mathbf{\Delta}}=G_{ \mathbf{\Delta}}\oplus NG_{ \mathbf{\Delta}}.
$$
\end{cor}

\subsection{The generalized  Jeffrey-Kirwan residue}
The Jeffrey-Kirwan residue  introduced in \cite {JK1} (see also \cite{JK2}) in the study of localization for nonabelian compact group actions,  is a powerful tool to compute intersection
numbers for symplectic quotients.

There are several ways to define
the Jeffrey-Kirwan residue, namely using iterated residues, inverse Laplace transforms or nested sets \cite {BrV,dCP1,JK1, JK2,JM,SV}. We will use Brion-Vergne's presentation~\cite{BrV}, which we briefly recall here.

Taking total degrees gives a grading on the space $R_{ \mathbf{\Delta}}=\oplus _{j\in \Z} R_{\mathbf{\Delta}}[j]$
 and $G_{ \mathbf{\Delta}}$ is contained in  $R_{\mathbf{\Delta}}[\le -r]:=\oplus_{j\le -r} R_{\mathbf{\Delta}}[j]$. Thus from Corollary~\mref{thm:BrionVergne} we obtain
\begin{equation}
R_{\mathbf{\Delta}}[\leq -r] = G_{\mathbf{\Delta}} \oplus (NG_{\mathbf{\Delta}}\cap R_{\mathbf{\Delta}}[\leq -r])
\mlabel{eq:rdec}
\end{equation}
Furthermore  $S_{\mathbf{\Delta}}=G_{\mathbf{\Delta}}[-r]$ is the highest degree part of $G_{\mathbf{\Delta}}$, giving the decomposition
\begin{equation}
G_{\mathbf{\Delta}} = G_{\mathbf{\Delta}}[<-r] \oplus S_{\mathbf{\Delta}}.
\mlabel{eq:gdec}
\end{equation}

Consider  the localization
$$\hat {R}_{ \mathbf{\Delta}}:= \mathbf{\Delta}^{-1}\hat {S}(U )$$
  of the ring $\hat {S}(U )$ of formal power series by inverting
the linear functions $\alpha \in \mathbf{\Delta}$ and  the natural decomposition
\begin{equation}
\hat {R}_{ \mathbf{\Delta}} =\hat{R}_{\mathbf{\Delta}}[>-r]\oplus R_{\mathbf{\Delta}}[\le -r].
\mlabel{eq:hatdec}
\end{equation}
Putting Eqs.~(\mref{eq:rdec}) -- (\mref{eq:hatdec}) together yields the decomposition
\begin{equation}
\hat {R}_{ \mathbf{\Delta}} =\hat{R}_{\mathbf{\Delta}}[>-r]\oplus
(NG_{\mathbf{\Delta}}\cap R_{\mathbf{\Delta}}[\leq -r]) \oplus G_{\mathbf{\Delta}}[<-r] \oplus S_{\mathbf{\Delta}}.
\mlabel{eq:alldec}
\end{equation}

\begin {defn}The {\bf Jeffrey-Kirwan residue map}
$${\rm Res}_{ \mathbf{\Delta}}: \hat {R}_{ \mathbf{\Delta}}\to S _{ \mathbf{\Delta}}$$
is defined to be the projection to the direct summand $S_{\mathbf{\Delta}}$ in Eq.~(\mref{eq:alldec}).
\end{defn}

Since the Jeffrey-Kirwan residue of a  Laurent power series is defined by that of the corresponding truncated Laurent polynomial, for the sake of simplicity, we focus here on $R_{\mathbf{\Delta}}$ which is a subspace of $\calm _{\F}(U^*\otimes \C)$, and the decomposition
\begin{equation}
{R}_{ \mathbf{\Delta}} ={R}_{\mathbf{\Delta}}[>-r]\oplus
(NG_{\mathbf{\Delta}}\cap R_{\mathbf{\Delta}}[\leq -r]) \oplus G_{\mathbf{\Delta}}[<-r] \oplus S_{\mathbf{\Delta}}.
\mlabel{eq:alldecr}
\end{equation}
 analogous to Eq.~(\mref{eq:alldec}).

\begin {coro} Let $U=\mathrm{span}(\mathbf{\Delta})$ and $r=\dim U$. Then for any $f\in R_{\mathbf{\Delta}}$,  the projection $P_{U}^r(f)$ from Definition~\mref{de:ProjU} is the Jeffrey-Kirwan residue of $f$.
\mlabel{co:jkr}
\end{coro}

\begin{proof} Let $\frac{1}{\prod_{\alpha\in \kappa} \alpha}$ be a spanning fraction of $S_{\mathbf{\Delta}}$. Then $\kappa$ is a basis of $U$ and $\prod_{\alpha\in \kappa} \alpha$ has degree $r$. Thus the fraction is in $\calm_U^r(V^\circledast\ot \C)$ and hence is fixed by $P_U^r$. On the other hand, the supporting cone for a polar germ in ${R}_{\mathbf{\Delta}}[>-r]$ or $(NG_{\mathbf{\Delta}}\cap R_{\mathbf{\Delta}}[\leq -r])$ does not span $U$, while the polar germs in $G_{\mathbf{\Delta}}[<-r]$ do not have p-order $r$. Hence the polar germs are annihilated by $P_U^r$.
\end{proof}

Motivated by this fact, we set the following definition.
\begin{defn} For a meromorphic germ $f$, and an $\F $-subspace $U$ of $V$, let $d={\rm dim }U$, then $P_{U}^d(f)$ is called the {\bf generalized Jeffrey-Kirwan residue} of $f$ supported on $U$.
\mlabel{de:jkr}
\end{defn}

\section {A filtered residue and a coproduct}
\mlabel{sec:jk}
In this part, we give two  further applications of our Laurent theory developed in Section~\mref{sect:decom}. We study the p-order of a meromorphic germ at zero, and defined an invariant, called p-residue for the germ. We show that for exponential sums, taking p-residue amounts to the exponential integrals.
We also define a coproduct on the space of meromorphic germs at zero with linear poles.

\subsection {The p-order and p-residue}
\mlabel{ss:pred}

The grading in Eq.~(\mref{eq:pgrad}) by p-orders of polar germs   gives a p-order for any elements in $\calm(V^\circledast\ot \C)$.

\begin {defn}
\mlabel {defn:Poder}
Let $f\in \calm (V ^\circledast\otimes \C)$. Let
\begin{equation}
\mathfrak L_{\conefamilyc}(f)=\oplus _{j\in J} S_j\oplus h
\mlabel{eq:fdec}
\end{equation}
be a $\conefamilyc$- supported Laurent expansion of $f$ for some appropriate family of supporting cones $\conefamilyc$ as in Definition~\mref{defn:relLaurent}.
\begin{enumerate}
\item
Define the {\bf polar order}, or {\bf p-order} in short, of $f$ to be
$${\rm \pord}(f):=\max_j({\rm \pord} (S_j)),$$
where $\pord(S_j)$ is from Definition~\mref{de:supp}.
\item
Let $S_i=\frac {h_i}{L_{i1}^{s_{i1}}\cdots L_{in_i}^{s_{in_i}}}, 1\leq i\leq t,$ be the \gname germs in Eq.~(\mref{eq:fdec})
We define the {\bf highest polar order residue}, or the {\bf p-residue} in short, of $f$ to be
$${\rm \pres} (f) {:}=\sum_{i=1}^t \frac {h_i(0)}{L_{i1}^{s_{i1}}\cdots L_{in_i}^{s_{in_i}}}.
$$
\end{enumerate}
\mlabel{defn:pres}
\end{defn}

These notions are well-defined thanks to the following property.

\begin {prop}
The p-order and p-residue of a meromorphic germ with linear poles depend neither on the choice of a Laurent expansion nor on the choice of the inner product used in the decomposition of $\calm (V^\circledast\otimes \C)$ in Theorem~\mref{thm:DecomF}.

 Furthermore, for orthogonally variate $f$ and $g$ in the sense of Definition~ \ref{de:orth}, we have
		$$\pres(f g)=\pres(f)\, \pres(g).$$
\label {pp:PordDec}
\end{prop}
Before giving the proof, let us recall the following elementary yet useful result.
\begin{lemma}
Let $\cali$ be a direct system and let $\varphi$ be a function on $\cali$. If $\varphi(i)=\varphi(j)$ for all $i\leq j$ in $\cali$, then $\varphi$ is a constant.
\mlabel{lem:phi}
\end{lemma}

\begin{proof} (of Proposition ~\mref{pp:PordDec})
The independence of the p-order on the choice of a Laurent expansion follows from the grading in Eq.~(\mref{eq:pgrad}).
From Eq.~(\mref{eq:subop}), the numerator of a polar germ and hence the p-residue of $f$ does not change under the subdivision map $\frakS_{(\conefamilyc,\conefamilyd)}$. Then the independence of the p-residue on the choice of a Laurent expansion follows from Lemma~\mref{lem:phi}.

We next prove the independence of the p-order on the inner product.
For an inner product $Q$ in $V$ and $f\in \calm (V ^\circledast\otimes \C)$ with ${\rm \pord }(f)=p$.
Following Eq.~(\mref{eq:fdec}), we write the Laurent expansion of $f$ supported by $\conefamilyc$ as
\begin{equation}
{\mathfrak L}_{\conefamilyc }(f)=\sum _{i=1}^r S_i+\sum_{j=r+1}^{n} S_j+h,
\mlabel{eq:lpg}
\end{equation}
with the \gname germs sharing the largest p-order $p$ grouped in the first sum and those with lesser p-order in the second sum.

Relative to a different inner product $R$ on $V$, an $S_i$ might not be  a \gname germ any longer.
Set
$$S_i=\frac {h_i(\ell _{i1}, \cdots, \ell _{im_i})}{L_{i1}^{s_{i1}}\cdots L_{in_i}^{s_{in_i}}}
$$
with $Q(\ell _{ip}, L_{iq})=0$. For $j=1, \cdots, m_i$, there are coefficients $a_{ij}^k$ such that
$$\ell _{ij}=\ell'_{ij}-\sum _{k=1}^{n_i}a_{ij}^kL_{ik},
$$
where $R(\ell '_{ij}, L_{ik})=0$ for $k=1, \cdots, n_i$. Then
$$
S_i=\frac {h_i(\ell' _{i1}, \cdots, \ell' _{im_i})}{L_{i1}^{s_{i1}}\cdots L_{in_i}^{s_{in_i}}}+{\rm terms \ of \ lower \ denominator \ degrees}.
$$
Thus
\begin {equation}
f=\sum_{i=1}^r \frac {h_i(\ell' _{i1}, \cdots, \ell' _{im_i})}{L_{i1}^{s_{i1}}\cdots L_{in_i}^{s_{in_i}}}+{\rm terms \ of \ lower \ denominator \ degrees}.
\mlabel {equ:DecomDiffIP}
\end{equation}
This gives a decomposition of $f$ as a linear combination of \gname germs for the inner product $R$.

The supporting cones from the right hand side of the above equation are easily seen to be  faces of the supporting cones in the decomposition of $f$ under the inner product $Q$  arising in Eq.~(\mref{eq:lpg}). So they  remain properly positioned. Since $h_i(\ell _{i1}, \cdots, \ell _{im_i})\not =0$, we also have $h_i(\ell '_{i1}, \cdots, \ell ' _{im_i})\not =0$. Therefore under the inner product $R$, the p-order of $f$ is again $p$.

Furthermore, Eq.~(\mref {equ:DecomDiffIP}) shows  how the \gname germs   of p-order ${\rm \pord}(f)$ change for a different inner product. In particular the constant terms of the numerators remain the same. Thus the p-residue does not depend on the choice of inner products.

The second statement follows from the fact that  the product of a polar germ with either a polar germ or a holomorphic germ which are orthogonally variate is again a polar germ. Thus the highest polar order part of $fg$ is the the product of the highest polar order part of $f$ and that of $g$.
\end{proof}

\begin{remark}
The proof of this proposition actually shows  how the terms of   p-order $p$ change as   the inner products change.
\end{remark}

To simplify the notation, for a \gname germ
$S=\frac {h(\ell _1, \cdots, \ell _m)}{L_1^{s_1}\cdots L_k^{s_k}},$
we set
$$S(0_{\vec{\ell}}):=\frac {h(0_{\vec{\ell}})}{L_1^{s_1}\cdots L_k^{s_k}}.$$

\begin {prop}
Let $f=\sum_i S_i+\sum_j T_j+h$, with $S_i$, $T_j$ \gname germs at zero, $h$ a holomorphic germ at zero, ${\rm \pord}(S_i)$'s all equal to $r$,  $\sum_i S_i \not =0$ and ${\rm \pord}(T_j)<k$. Then ${\rm \pord }(f)=r$ and
$\pres(f)=\sum_i S_i(0_{\vec{\ell}}).$
\mlabel {pp:GenPRes}
\end{prop}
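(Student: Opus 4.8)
The plan is to reduce the given presentation of $f$ to a decomposition of the canonical form supplied by Corollary~\ref{coro:Decom}, so that Definitions~\ref{defn:Poder} and~\ref{defn:pres} apply verbatim, while tracking both the p-order and the constant terms of the numerators throughout the reduction. The two well-definedness results already in hand, Propositions~\ref{pp:PordDec} and~\ref{prop:presind}, tell us that subdivision changes neither the p-order nor the p-residue, and I intend to reuse their mechanism rather than recompute anything.

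First I would rescale the linear forms in all denominators to be pseudo-positive, absorbing the resulting scalars into the numerators. The supporting cones of the $S_i$ and $T_j$ then have pseudo-positive generators, so by Lemma~\ref{lem:ProperSubd} they admit a common properly positioned subdivision $\{D_\mu\}$. Applying Proposition~\ref{pp:SubDDecom} to each germ I would write $S_i=\sum_\mu S_{i\mu}$ and $T_j=\sum_\mu T_{j\mu}$, where every piece is a \gname germ supported on some $D_\mu$. The explicit formula in the proof of Proposition~\ref{pp:SubDDecom}, which rests on Proposition~\ref{prop:DerFrac}, shows that subdivision preserves the p-order of each germ, so every $S_{i\mu}$ has p-order $k$ and every $T_{j\mu}$ has p-order $\pord(T_j)<k$; the same formula shows, exactly as in the proof of Proposition~\ref{prop:presind}, that it is compatible with evaluation of numerators at $0$, i.e. $\sum_\mu S_{i\mu}(0)=S_i(0)$.

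Next I would group all these pieces by their now pseudo-positive denominator and combine those sharing an identical denominator into a single \gname germ by adding the numerators; since the numerators are holomorphic in forms perpendicular to the common denominator, the sum is again such a germ and its value at $0$ is the sum of the values. This produces a decomposition $f=\sum_{(\mu,\vec r)} g_{\mu,\vec r}/\prod_s M_{\mu s}^{r_s}+h$ with pairwise non-proportional denominators and properly positioned supporting cones, that is, a decomposition of the form demanded by Corollary~\ref{coro:Decom} once vanishing numerators are discarded. Because proportional products of distinct pseudo-positive linear forms force equal power vectors $\vec r$, and hence equal p-order, the combining step merges only pieces of the same p-order; in particular every term with $|\vec r|=k$ receives contributions solely from the $S$-pieces, so its numerator is the corresponding sum of contributions from the $S_{i\mu}$ and, by the previous paragraph, its value at $0$ aggregates to $\sum_i S_i(0)$.

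It remains to verify that the p-order-$k$ part does not vanish, which is where the hypothesis $\sum_i S_i\neq 0$ enters and which I expect to be the only genuine point. Since the rewriting above is an honest identity of germs, the top-order part $P:=\sum_i S_i$ is displayed as its own decomposition $P=\sum_{(\mu,\vec r):\,|\vec r|=k} g_{\mu,\vec r}/\prod_s M_{\mu s}^{r_s}$, every term of which has p-order exactly $k$; as $P\neq 0$, at least one combined top-order numerator is nonzero, and the non-holomorphicity Theorem~\ref{thm:LinearlyIndepOfGerms} (via the well-definedness of $\pord$) guarantees that the strictly lower-order terms coming from the $T_j$ can neither cancel these top terms nor raise the p-order. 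All pieces satisfy $|\vec r|\le k$, so the maximum is attained and equals $k$, whence $\pord(f)=k$. Reading the p-residue off this canonical decomposition as in Definition~\ref{defn:pres} then gives $\pres(f)=\sum_{(\mu,\vec r):\,|\vec r|=k} g_{\mu,\vec r}(0)/\prod_s M_{\mu s}^{r_s}=\sum_i S_i(0)$, as claimed.
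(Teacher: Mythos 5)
Your proof is correct and takes essentially the same route as the paper's: pass to a common properly positioned subdivision (Lemma~\ref{lem:ProperSubd}), split each germ via Proposition~\ref{pp:SubDDecom}, combine proportional terms into a decomposition of the form in Corollary~\ref{coro:Decom}, and use $\sum_i S_i\neq 0$ to conclude that a nonzero p-order-$k$ term survives, from which $\pord(f)=k$ and $\pres(f)=\sum_i S_i(0)$ are read off. Your write-up merely makes explicit the bookkeeping the paper leaves implicit, namely that subdivision preserves both the p-order and the numerator values at $0$, and that combining proportional denominators only merges pieces of equal p-order.
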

\begin {proof} Taking a subdivision of the set of supporting cones of the germs $S_i$'s and $T_j$'s, we have
$S_i=\sum_{i\ell} S_{i\ell}$ and $T_j=\sum_{jm} T_{jm}$. Then
$f=\sum_{i\ell} S_{i\ell}+\sum_{jm} T_{jm}+h.
$
Combining terms that are proportional to one another, we can assume that this decomposition satisfies the conditions in Theorem \mref {thm:DecomF}. In this decomposition there are no terms of p-order greater than $r$ and the sum of all the terms of p-order $r$ is
$\sum _{i,\ell}S_{i\ell}=\sum _i S_i\not =0.$
Thus  ${\rm \pord }(f)=k$ and
${\rm \pres} (f)=\sum_{i\ell} S_{i\ell}(0_{\vec{\ell}})=\sum_i S_i(0_{\vec{\ell}}).$
\end{proof}

\subsection {The p-residue of the exponential sum on a lattice cone}
\mlabel {sect:ResExpSum}
As in \cite{GPZ2}, we can reinterpret  the constructions of \cite{BV1,GP,La} in terms of lattice cones.

We recall from  \cite{GPZ2} that a {\bf lattice cone }  in $V_k$ is a pair  $(C, \Lambda _C)$  with $C$  a cone in $V_k$ and $\Lambda _C$  a lattice in ${\rm lin}(C)$  generated by lattice vectors.
A lattice cone  $(C, \Lambda _C)$  is called {\bf strongly convex} (resp. {\bf simplicial}) if $C$ is. A lattice cone $(C, \Lambda _C)$ is called {\bf smooth} if the additive monoid $\Lambda_C\cap C$ has a monoid basis. In other words, there are linearly independent lattice vectors $v_1,\cdots,v_\ell$ such that
$\Lambda_C\cap C=\ZZ_{\geq 0}\{v_1,\cdots,v_\ell\}$.

To  a  lattice cone $(C,\Lambda_C)$ we can assign two meromorphic functions. One is the exponential sum
$S(C,\Lambda_C)$~\cite{Ba} (corresponding to  $S^c(C,\Lambda_C)$ in \cite{GPZ2}), given in the strongly convex case by
$$S\lC(\vec \e ): =
\sum_{\vec{n}\in C\cap \Lambda_C} e^{\langle \vec n, \vec \e \rangle}.$$
The other function is the exponential integral $I(C,\Lambda_C)$ \cite {GPZ2}, which is a generalization of Eq.~(\mref{eq:phi0}), where the matrix $A_C$ is with respect to a basis of $\Lambda _C$.

\begin {lemma} For a smooth lattice cone $(C, \Lambda _C)$, we have
$${\rm \pord}(S(C, \Lambda _C))={\rm \pord}(I(C, \Lambda _C))={\rm dim}(C), \quad
{\rm \pres}(S(C, \Lambda _C))=I(C, \Lambda_C).
$$
In fact, we have
$$S(C, \Lambda_{C})=I (C, \Lambda_C) +({\rm terms \ of \ p\text{-}order < \ dim}(C)).
$$
\mlabel{lem:por}
\end{lemma}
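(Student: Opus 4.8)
The plan is to exploit the smoothness of $C$ to factorize the exponential sum, thereby reducing the several-variable statement to the one-variable Euler--Maclaurin expansion recalled in the introduction. Write $C=\cone{v_1,\dots,v_n}$ where $v_1,\dots,v_n$ are the primitive generators, which by smoothness form part of a basis of $\Lambda_k$; consequently the lattice points of the cone form the free commutative monoid $C\cap\Lambda_C=\{\sum_i a_iv_i\mid a_i\in\N\}$. Setting $L_i:=L_{v_i}$, the exponential sum therefore factorizes as a product of geometric series,
\[
S(C,\Lambda_C)=\prod_{i=1}^n\Big(\sum_{a\ge 0}e^{aL_i}\Big)=\prod_{i=1}^n\frac{1}{1-e^{L_i}}.
\]

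Next I would insert the one-variable expansion $\frac{1}{1-e^{L_i}}=\frac{-1}{L_i}+\mu_i(L_i)$, where $\mu_i$ is holomorphic at $0$ and $-1/L_i$ is precisely the one-variable integral (this is the content of the scalar Euler--Maclaurin formula in the introduction). Expanding the product gives
\[
S(C,\Lambda_C)=\frac{(-1)^n}{L_1\cdots L_n}+\sum_{\emptyset\neq J\subsetneq\{1,\dots,n\}}\frac{(-1)^{|J|}}{\prod_{i\in J}L_i}\prod_{j\notin J}\mu_j(L_j),
\]
the leading term being the product of the one-variable pole parts. By Fubini this leading fraction is exactly $I(C,\Lambda_C)$, and by the normalization of $\Phi$ on the smooth cone $C$ it coincides with $\Phi(C,\Lambda_C)$; it is a single \gname germ with constant numerator whose supporting cone is $C$, hence of $\pord$ equal to $n=\dim(C)$.

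It then remains to check that every remaining summand has $\pord<n$, which I would establish by decomposing it via Theorem~\ref{thm:merodec} into \gname germs plus a holomorphic remainder. Each such summand is a holomorphic function times a fraction with strictly fewer than $n$ pole factors $1/L_i$; the decomposition only redistributes powers among these $<n$ pole directions while the holomorphic factor contributes to numerators alone, so its $\pord$ is at most $|J|<n$. Thus $S(C,\Lambda_C)=\Phi(C,\Lambda_C)+(\text{terms of }\pord<n)$, which is the displayed ``in fact'' identity. Finally, applying Proposition~\ref{pp:GenPRes} to this decomposition --- in which the unique top-order part is the single nonzero \gname germ $\Phi(C,\Lambda_C)$ with constant numerator, so that its value at $0$ is itself --- yields $\pres(S(C,\Lambda_C))=\Phi(C,\Lambda_C)=I(C,\Lambda_C)$.

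I expect the main obstacle to be the $\pord$ bookkeeping of the cross terms: one must argue cleanly that multiplying the sub-fraction $1/\prod_{i\in J}L_i$ by the holomorphic factor $\prod_{j\notin J}\mu_j(L_j)$ --- whose arguments $L_j$ are generally \emph{not} $Q$-orthogonal to the $L_i$, $i\in J$ --- cannot raise the polar order above $|J|$. This is where the precise construction of the decomposition in Theorem~\ref{thm:merodec} (which keeps the pole directions fixed and only adjusts multiplicities and numerators) must be invoked, rather than the mere existence of a decomposition.
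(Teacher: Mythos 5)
Your proof is correct and takes essentially the same route as the paper's: smoothness gives the factorization $S(C,\Lambda_C)=\prod_{i=1}^{n}\frac{1}{1-e^{L_i}}$, the one-variable splitting $\frac{1}{1-e^{x}}=-\frac{1}{x}+h(x)$ is inserted, and the top term of the expanded product is identified with $I(C,\Lambda_C)=\Phi(C,\Lambda_C)$. Your additional bookkeeping---that each cross term is a holomorphic function over a product of fewer than $\dim(C)$ linearly independent forms, hence decomposes via Theorem~\ref{thm:merodec} into polar germs of p-order at most $|J|<\dim(C)$ since that decomposition never raises total denominator degree---is a point the paper leaves implicit, and you handle it correctly.
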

\begin {proof} Let $v_1, \cdots, v_d$ (where $d=\dim C$) be a basis of $\Lambda _C$ that generates $C$ as a cone. Then
$$S(C, \Lambda_{C})(\vec \e)=\prod_{i=1}^d \frac 1{1-e^{\langle v_i, \vec \e\rangle }}=\prod_{i=1}^d \Big(-\frac 1{\langle v_i, \vec \e\rangle }+h(\langle v_i, \vec \e\rangle )\Big),
$$
where $h$ is holomorphic. So the highest p-order term is
$\prod_{i=1}^d \left(-\frac 1{\langle v_i, \vec \e\rangle } \right)$
which is $I (C, \Lambda_C)$ and has p-order $d$.
\end{proof}

\begin {lemma} For a lattice cone $(C,\Lambda_C)$,
$$I (C,\Lambda_C)\not =0\Longleftrightarrow S\lC \not= 0\Longleftrightarrow C \,\text{ is strongly convex}.$$
\end{lemma}

\begin {proof} We already know that $I\lC=0$ and $S\lC=0$ if $C$ is not strongly convex {\cite {GPZ2}}.
So we only need to prove that if $C$ is strongly convex, then $I\lC \not =0$ and $S\lC\neq 0$.

Take a smooth subdivision $\{C_i\}$ of $C$. Since $C$ is strongly convex, $\{C_i\}$ is properly positioned. So the $I(C_i, \Lambda _{C_i})$'s are linearly independent by the Non-holomorphicity Theorem~\mref{thm:non-holo}. Then $I(C,\Lambda_C)$, as their sum, can not be $0$.

Further, note that
\begin{align*}
S(C, \Lambda _C)&=\sum_i S(C_i, \Lambda_{C})+({\rm terms \ with \ p}\mbox{-}\text{order} < \dim(C))\\
&=\sum_i S(C_i, \Lambda_{C})=I (C_i, \Lambda_{C}) +({\rm terms \ with \ p\mbox{-}order < \ dim}(C)).
\end{align*}
By Proposition~\mref {pp:GenPRes}, ${\rm \pord}(S(C, \Lambda _C))<{\rm dim(C)}$ implies
$\sum_i I (C_i, \Lambda _C)=0,
$
which is a contradiction. Then ${\rm \pord}(S(C, \Lambda _C))={\rm dim(C)}$ and so $S \lC\not =0$.
\end{proof}

\begin{prop}\label{thm:ressubd} For any subdivision $\{(C_i, \Lambda _{C})\}$ of  a  lattice cone $(C, \Lambda _C)$, we have
$${\rm \pres}(S(C, \Lambda _C))=\sum_i {\rm \pres}(S(C_i, \Lambda _{C_i})).
$$
So the map ${\rm \pres}\circ S$ is compatible with subdivisions.
\end{prop}

\begin {proof} It is sufficient to consider a  subdivision $\{(C_i, \Lambda _{C})\}$ of $(C,\Lambda_C)$ with smooth cones $C_i$, since any other subdivision can be further subdivided into one containing only smooth cones. It   follows from the definition of $S(C,\Lambda_C)$ that
$$S(C, \Lambda _C)=
\sum_{I\subseteq [r]} (-1)^{|I|+1} S(C_I,\Lambda_I) =\sum_i S(C_i, \Lambda_{C})+({\rm terms \ of \ p\text{-}order < \ dim}(C)),
$$
where $C_I:=\cap_{i\in I}C_i$. Also by Lemma~\mref{lem:por}
$$S(C_i, \Lambda_{C})=\sum _j T_{ij}+({\rm terms \ of \ p\text{-}order < \ dim}(C)),
$$
where $T_{ij}$ are \gname germs at zero with
${\rm \pord}(T_{ij})={\rm dim }(C)$. Thus
$$S(C, \Lambda _C)=\sum _{i,j} T_{ij}+({\rm terms \ of \ p\text{-}order < \ dim}(C)).
$$

If $C$ is strongly convex, then ${\rm \pord}(S(C, \Lambda _C))={\rm dim(C)}$, and by Proposition~\mref {pp:GenPRes},
$${\rm \pres}(S(C, \Lambda _C))=\sum_{i, j} T_{ij}(0)=\sum_{i} {\rm \pres}(S(C_i, \Lambda _C)).
$$
If $C$ is not strongly convex, then $S(C, \Lambda _C)=0$; while by Proposition \mref {pp:GenPRes}, this means $\sum _{i,j} T_{ij}=0$, that is $\sum_i {\rm \pres}(S(C_i, \Lambda _C))=0$.
So the equality in the theorem holds in either case.
\end{proof}

Note that the operator $I$ is also compatible with subdivisions \cite {GPZ2}. Thus as a consequence of Proposition~\mref{thm:ressubd}, we obtain

\begin {coro}\mlabel{cor:resS} For a lattice cone $(C, \Lambda _C)$, we have
${\rm \pres}( S(C, \Lambda _C))=I (C, \Lambda_C).
$
\mlabel{co:pres}
\end{coro}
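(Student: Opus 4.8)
The plan is to reduce the computation to the case of smooth lattice cones, where both sides have already been identified, and then to reassemble the pieces using the additivity of $\Phi$.

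First I would choose, by Lemma~\ref{lem:ConeSubd}, a smooth subdivision $\{(C_i,\Lambda_C)\}$ of the lattice cone $(C,\Lambda_C)$; every $C_i$ is then smooth with respect to $\Lambda_C$, and when $C$ is strongly convex the family $\{C_i\}$ is automatically properly positioned. Applying the subdivision-compatibility theorem established just above to this particular subdivision gives
$$\pres(S(C,\Lambda_C))=\sum_i \pres(S(C_i,\Lambda_{C_i})).$$

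Next I would treat each smooth piece separately. By the first lemma of this section, the expansion $S(C_i,\Lambda_{C_i})=\Phi(C_i,\Lambda_{C_i})+(\text{terms of p-order}<\dim C_i)$ shows that the part of $S(C_i,\Lambda_{C_i})$ of highest p-order is precisely $\Phi(C_i,\Lambda_{C_i})$; since the p-residue extracts exactly this top p-order part, we get $\pres(S(C_i,\Lambda_{C_i}))=\Phi(C_i,\Lambda_{C_i})$. Substituting into the displayed identity and invoking the additivity of $\Phi$ on a subdivision (Lemma~\ref{lem:phi}, applicable since smooth cones are simplicial) yields
$$\pres(S(C,\Lambda_C))=\sum_i \Phi(C_i,\Lambda_{C_i})=\Phi(C,\Lambda_C),$$
which is the assertion.

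The step I expect to require the most care is the smooth-cone identification $\pres(S(C_i,\Lambda_{C_i}))=\Phi(C_i,\Lambda_{C_i})$: one must check that the top p-order term of the product $\prod_j (1-e^{\langle v_j,\vec\e\rangle})^{-1}$ is genuinely $\Phi(C_i,\Lambda_{C_i})$, viewed as a fraction with a constant numerator so that it qualifies as a legitimate p-residue term, rather than being merely proportional to it. I would also dispose of the degenerate situation where $C$ is not strongly convex: there $S(C,\Lambda_C)=0$ and $\Phi(C,\Lambda_C)=0$ by the strong-convexity lemmas of this section, so both sides vanish and the identity holds trivially; this case is in any event already subsumed in the proof of the subdivision theorem, which covers the non-strongly-convex situation.
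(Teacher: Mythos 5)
Your proposal is correct and follows essentially the same route as the paper: the corollary is stated there as an immediate consequence of the subdivision-compatibility theorem, obtained exactly as you do by taking a smooth subdivision, applying the theorem, identifying $\pres(S(C_i,\Lambda_{C_i}))=\Phi(C_i,\Lambda_{C_i})$ on each smooth piece via the first lemma of the section, and summing using the additivity of $\Phi$. Your explicit handling of the non-strongly-convex case (both sides vanish by the strong-convexity lemmas) is a point the paper leaves implicit, but it is the intended argument.
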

\begin{ex} Take  $\Lambda=\Z^2\subset  \R^2$ and $C=\langle e_1,e_1+e_2\rangle$ with $(e_1,e_2)$ the canonical orthonormal basis in $\R^2$. Then $S^c(C,\Lambda_C) =\frac{1}{\left(1-e^{\e_1}\right)\left(1-e^{\e_1+\e_2}\right)}$ has p-order $2$ and p-residue
	$I(C,\Lambda_C)= \frac{1}{\e_1(\e_1+\e_2) }$.
\end{ex}

\subsection{Further perspectives: a coproduct on meromorphic germs}\label{sec:coproduct}
As an outlook for future study, we  end the paper with some observations on a coproduct on $\calm(V^\circledast\ot \C)$  derived from our Laurent theory of meromorphic germs at zero.

Let $f$ be in $\calm (V ^\circledast\otimes \C)$ with a Laurent expansion
$f=\sum\mlimits_{i=1}^N S_i+h$ supported by a properly positioned family ${\conefamilyc}$. Let $S_i=\frac{h_i}{\vec L_i^{\vec s_i}}$ and $h_0:= h$. Define
$$
\Delta_{\conefamilyc} (f):=\sum\mlimits_{i=1}^N h_i\otimes \frac 1{\vec L_i^{\vec s_i}}.
$$
Using the  compatibility  with subdivisions, we see that $\Delta_{\conefamilyc}(f)$ does not depend on the choice of Laurent expansions of $f$ and so it defines a map
$$\Delta_{\calm (V ^\circledast\otimes \C)} : \calm (V ^\circledast\otimes \C)\longrightarrow \calm (V^\circledast\otimes \C)\otimes \calm (V ^\circledast\otimes \C).$$   Furthermore, since
$$(\id\otimes \Delta_{\calm (V ^\circledast\otimes \C)})\circ \Delta_{\calm (V ^\circledast\otimes \C)} (f)=\sum_i h_i\otimes 1\otimes \frac 1{\vec L_i^{\vec s_i}}=(\Delta_{\calm (V ^\circledast\otimes \C)}\otimes \id)\circ \Delta_{\calm (V ^\circledast\otimes \C)} (f),$$
$\Delta_{\calm (V ^\circledast\otimes \C)}$ is coassociative. Thus $\Delta_{\calm (V ^\circledast\otimes \C)}$ defines a coproduct on $\calm(V^\circledast\ot V)$. This coproduct is not compatible with the multiplication $m_{\calm (V ^\circledast\otimes \C)}$ on $\calm(V^\circledast\ot V)$, so we do not have a bialgebra. However, we do have the compatibility property
\begin{equation}\label{eq:Delta}m_{\calm (V ^\circledast\otimes \C)}\circ \Delta_{\calm (V ^\circledast\otimes \C)}=\id_{\calm (V ^\circledast\otimes \C)}.
\end{equation}

We compare this coproduct with the coproduct on cones~\mcite{GPZ3}, especially in the context of renormalization a la Connes and Kreimer~\cite{CK} who regarded a renormalized map as a map defined on a coalgebra and taking values in  meromorphic functions.
So fix a linear map
$$\phi:\QQ\cc \to \calm (V^\circledast\otimes \C)$$
and assume that the inner product to construct the coproduct in $\QQ \cc$ coincides with the inner product used to define polar germs. Taking $P$ to be the projection to $\calm _+ (V^\circledast\otimes \C)$, then \abf~\cite{GPZ2,GPZ3} yields $\phi=\phi_+^{\star (-1)}\star \phi_-$ for certain linear maps $\phi_\pm: \QQ\cc \to \calm_\pm(V^\circledast\ot \C)$. These maps fit in the following diagram:
$$\xymatrix{
  \QQ \cc \ar[d]_{\Delta _{\QQ \cc}} \ar[rrr]^{\phi}
                & &&\calm (V^\circledast\otimes \C)  \ar[d]^{\Delta _{\calm (V^\circledast\otimes \C) }}  \\
  \QQ \cc\otimes \QQ \cc  \ar[rrr]^(.4){\phi_+^{\star (-1)}\otimes \phi_-}
                & &&\calm (V^\circledast\otimes \C)\otimes \calm (V^\circledast\otimes \C)             }
$$
The commutativity of this diagram for a given $\phi$ provides an alternative to the Algebraic Birkhoff Decomposition, without going through the coproduct of cones. This should be the case under suitable conditions, for example when the inner product use to construct the coproduct in $\QQ \cc$ coincides with the inner product used to define polar germs, and when $\phi$ is the exponential sum $S(C,\Lambda_C)$, in which case one would  recover the Euler-Maclaurin formula in \cite[Theorem 7.15]{BV2} and  \cite[Theorem 4.10 and Corollary 4.11]{GPZ2}.

\smallskip
\noindent
{\bf Acknowledgements}:
The authors acknowledge supports from the Natural Science Foundation of China (Grant No. 11071176, 11221101 and 11371178),  the German Research Foundation (DFG grant PA 1686/6-1) and the National Science Foundation of US (Grant No. DMS 1001855).
Part of the work was completed during visits of two of the authors at Sichuan University, Lanzhou University and Capital Normal University, to which they are very grateful.

\end{document}